\documentclass[12pt]{article}
\usepackage{amsmath,amssymb,amsthm,amsfonts,mathrsfs}
\usepackage{appendix}
\usepackage{enumitem,graphics,xcolor,yfonts,colonequals}
\usepackage{stmaryrd}
\usepackage[alphabetic]{amsrefs}
\usepackage[all,cmtip]{xy}
\usepackage{tikz-cd}
\usepackage[colorlinks,anchorcolor=blue,citecolor=blue,linkcolor=blue,urlcolor=blue,bookmarksopenlevel=1]{hyperref}
\urlstyle{rm}

\usepackage[margin=1in]{geometry}

\usepackage{fancyhdr}
\pagestyle{fancy}

\fancyhf{}
\chead{\scriptsize\MakeUppercase\rightmark}
\cfoot{\footnotesize\thepage}

\fancypagestyle{titlepage}
{
	\fancyhf{}

	\fancyfoot[l]{
	\href{https://mathscinet.ams.org/mathscinet/msc/msc2020.html}
		{\emph{2020 Mathematics Subject Classification}}
		14E08, 14J28\\
		\emph{Keywords}: Heegner divisors, irrationality, K3 surfaces, moduli spaces
	}
}

%Remove MR Number in the bibliography
\AtBeginDocument{%
	\def\MR#1{}
}

    %Natural numbers
\newcommand{\bZ}{\mathbb{Z}}    %Integers
\newcommand{\bQ}{\mathbb{Q}}    %Rational numbers
    %Real numbers
\newcommand{\bC}{\mathbb{C}}    %Complex numbers
    %Finite field

    %Affine line
\newcommand{\cC}{\mathcal{C}}   %Moduli of cubics
   %Coherent sheaf
\newcommand{\cF}{\mathcal{F}}   %Moduli of K3
\newcommand{\cLL}{\mathcal{L}}  %Line bundle
\newcommand{\cM}{\mathcal{M}}  %Moduli of curves
\newcommand{\cO}{\mathcal{O}}   %Structure sheaf
\newcommand{\bP}{\mathbb{P}}    %Projective space

   %Automorphism group
    %Discriminant group
\newcommand{\Hom}{\operatorname{Hom}}   %Hom-set
\newcommand{\Gr}{\mathrm{Gr}}           %Grassmannian
\newcommand{\CGr}{\mathrm{CGr}}         %Cone over Grassmannian
     %Neron-Severi group
\newcommand{\Pic}{\operatorname{Pic}}	%Picard group
\newcommand{\GL}{\mathrm{GL}}         %Projective linear group
\newcommand{\PGL}{\mathrm{PGL}}         %Projective linear group
\newcommand{\SL}{\mathrm{SL}}           %Special linear group
           %Metaplectic group
             %Group of isometries

\newcommand{\irr}{\operatorname{irr}}   %Degree of irrationality
\newcommand{\covgon}{\operatorname{cov.gon}}   %Covering gonality
   %Covering genus

\newtheorem*{thm*}{Theorem}
\newtheorem*{prop*}{Proposition}
\newtheorem*{cor*}{Corollary}
\newtheorem{thm}{Theorem}[section]
\newtheorem{prop}[thm]{Proposition}
\newtheorem{cor}[thm]{Corollary}
\newtheorem{lemma}[thm]{Lemma}

\numberwithin{equation}{section}

\theoremstyle{definition}

\newtheorem{rmk}[thm]{Remark}

%--------------------Document begins
\begin{document}
\title{On the irrationality of moduli spaces of K3~surfaces}
\author{Daniele Agostini\and
Ignacio Barros\and
Kuan-Wen Lai}
\date{}

%--------------------Contact information
\newcommand{\ContactInfo}{{
% additional braces for segregating \footnotesize
\bigskip\footnotesize

\bigskip
\noindent D.~Agostini,
\textsc{Universit\"at T\"ubingen\\
Fachbereich Mathematik, Auf der Morgenstelle 10, 72076 Tübingen, Germany}\par\nopagebreak
\noindent\textsc{Email:} \texttt{daniele.agostini@uni-tuebingen.de}\\
\textsc{Max-Planck-Institut f\"{u}r Mathematik in den Naturwissenschaften\\
Inselstrasse 22, 04103 Leipzig, Germany}\par\nopagebreak
\noindent\textsc{Email:} \texttt{daniele.agostini@mis.mpg.de}

\bigskip
\noindent I.~Barros,
\textsc{Departement Wiskunde, Universiteit Antwerpen\\
Middelheimlaan 1, 2020 Antwerp, Belgium}\par\nopagebreak
\noindent\textsc{Email:} \texttt{ignacio.barros@uantwerpen.be}

\bigskip
\noindent K.-W.~Lai,
\textsc{Mathematisches Institut der Universit\"at Bonn\\
Endenicher Allee 60, 53121 Bonn, Germany}\par\nopagebreak
\noindent\textsc{Email:} \texttt{kwlai@math.uni-bonn.de}
}}

\maketitle
\thispagestyle{titlepage}

%--------------------Abstract
\begin{abstract}
We study how the degrees of irrationality of moduli spaces of polarized K3 surfaces grow with respect to the genus $g$. We prove that the growth is bounded by a polynomial function of degree $14+\varepsilon$ for any $\varepsilon>0$ and, for three sets of infinitely many genera, the bounds can be refined to polynomials of degree $10$. The main ingredients in our proof are the modularity of the generating series of Heegner divisors due to Borcherds and its generalization to higher codimensions due to Kudla, Millson, Zhang, Bruinier, and Westerholt-Raum. For special genera, the proof is also built upon the existence of K3 surfaces associated Hodge theoretically with certain cubic fourfolds, Gushel--Mukai fourfolds, and hyperk\"{a}hler fourfolds.
\end{abstract}

%\setcounter{tocdepth}{2}
%\tableofcontents

%--------------------Introduction
\section{Introduction}
\label{sect:intro}

The coarse moduli space $\cF_g$ of polarized K3 surfaces of genus $g$ parametrizes isomorphism classes of pairs $(S,H)$ where $S$ is a complex K3 surface and $H$ is a primitive ample line bundle with $(H^2)=d=2g-2$. In analogy with the moduli space of curves $\cM_g$, the space $\cF_g$ is unirational for $g$ small enough: Mukai's celebrated results \cites{Muk88,Muk92,Muk06,Muk10,Muk16} established structure theorems for $\cF_g$ which imply unirationality in the range $g\leq 12$ and $g=13,16,18,20$. Other unirationality results have been obtained by Nuer \cite{Nue17} for $g=14$ and by Farkas and Verra \cites{FV18,FV19} for $g=14,22$.

However, as for $\mathcal{M}_g$, the space $\mathcal{F}_g$ is of general type when $g$ is sufficiently large. More precisely, $\cF_g$ is of general type for $g>62$ and for some isolated values $g>46$, see \cite{GHS07}*{Theorem~1}. The question that we study in this paper is:
$$
    \text{\it How irrational is }\mathcal{F}_g\text{ \it as }g\text{ \it grows?}
$$

In \cite{BPELU17}, various measures of irrationality for an irreducible complex projective variety $X$ are revisited, the most important ones being the \emph{degree of irrationality} and the \emph{covering gonality}. The degree of irrationality, first introduced in \cite{MH82}, is denoted by $\irr(X)$ and defined as the smallest degree of a dominant rational map
$$\xymatrix{
    X\ar@{-->}[r] & \bP^{\,\dim X}.
}$$
The \emph{covering gonality}, denoted by $\covgon(X)$, is the smallest possible gonality of a curve passing through a general point of $X$. This measure was studied initially in \cite{LP95} and later in \cite{Bas12}. These two birational invariants are related by the inequality:
\begin{equation}
\label{eqn:irr-covgon}
    \covgon(X) \leq \irr(X).
\end{equation}
We see that $\irr(X)=1$ if and only if $X$ is rational, and $\covgon(X)=1$ if and only if $X$ is uniruled. Nevertheless, the two invariants capture different phenomena. For instance, if $S$ is a polarized K3 surface of genus $g$, then a classical result of Bogomolov and Mumford \cite{MM83}*{page~351} implies that $S$ can be covered by possibly singular elliptic curves, hence $\covgon(S)=2$, whereas the best known upper bound for the degree of irrationality is $\irr(S)\leq O(\sqrt{g})$ \cite{Sta17}*{Theorem~5.1}.

In general, the class of varieties for which the degree of irrationality or the covering gonality is known is fairly narrow. Donagi proposed to find bounds on these invariants for the moduli space $\mathcal{M}_g$ of genus $g$ curves and the moduli space $\mathcal{A}_g$ of principally polarized abelian varieties of dimension $g$ \cite{BPELU17}*{Problem~4.4}. The only known upper bound on the covering gonality of $\mathcal{M}_g$ is %induced by the Hurwitz correspondence between $\mathcal{M}_{0,4g}$ and $\mathcal{M}_g$, and is 
given by the Hurwitz number $h_{g,\delta(g)}$ counting simply ramified covers of $\bP^1$ of degree $\delta(g)=\lfloor\frac{g+3}{2}\rfloor$ with $2g+2\delta(g)-2$ fixed branch points. However, these numbers are quite large and grow more than exponentially with $g$. To the best of the authors' knowledge, there is no better upper bound for the covering gonality of $\mathcal{M}_g$, nor any kind of upper bound for the degree of irrationality. As for lower bounds, the only known bound for the covering gonality (and thus for the degree of irrationality) of both $\mathcal{M}_g$ and $\mathcal{F}_g$ for $g$ large is $2$ due to the classical results \cites{HM82,GHS07}. An interesting question is whether these numbers actually grow to infinity with $g$.

%\agostini{Actually, I think that the paper \cite{DYZ17} only gives the asymptotic when one of $g$ or $d$ is fixed, so I would rephrase the part below as: however these numbers are very large and grow more than exponentially with $g$. } According to \cite{DYZ17}, the number $h_{g,g+1}$ grows asymptotically like
%$$
%    \frac{2}{((g+1)!)^2}\binom{g+1}{2}^{4g}
%$$
%when $g$ is large, which is much faster than exponential growth.

As the first approach to the question about measuring the irrationality of moduli spaces, we give some evidence that the situation for $\mathcal{F}_g$ may be substantially simpler than for $\mathcal{M}_g$. The main result in this paper is:

\begin{thm}
\label{thm:allK3}
For every $\varepsilon>0$, there exists a constant $C_\varepsilon>0$ such that
$$
    \irr(\mathcal{F}_g)
    \leq C_\varepsilon\cdot g^{14+\varepsilon}
    \quad\text{for all}\quad
    g.
$$
\end{thm}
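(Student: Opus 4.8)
The plan is to exploit the description of $\cF_g$ as a $19$-dimensional orthogonal modular variety --- the quotient of a type IV domain attached to the lattice $L_d = 2U\oplus 2E_8(-1)\oplus\langle -2d\rangle$ with $d=2g-2$ by an arithmetic subgroup of $\uO(L_d)$ --- and to bound $\irr(\cF_g)$ through a single intersection-theoretic inequality. Concretely, for any big line bundle $L$ on a smooth projective model of $\cF_g$ whose complete (or a suitable sub-) linear system defines a map that is birational onto its image, a generic linear projection of that image to $\bP^{19}$ yields $\irr(\cF_g)\leq L^{19}$. I would therefore take $L$ to be a positive combination $a\lambda+\sum_i b_iH_{n_i}$ of the Hodge class $\lambda$ and of Heegner (Noether--Lefschetz) divisors $H_{n_i}$, chosen big and birationally very ample, and then optimize the coefficients and indices so as to make the top self-intersection $L^{19}$ as small as possible. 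Note that although a \emph{general} point of $\cF_g$ has Picard rank one and lies on no Heegner divisor, this is irrelevant: what separates general points is the line bundle $\cO_{\cF_g}(H_n)$ and its global sections, whose generic members do pass through general points.

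The heart of the estimate is to control $L^{19}$. Expanding multinomially, $L^{19}$ becomes a positive combination of mixed intersection numbers $\lambda^{\,j}\cdot H_{n_{i_1}}\cdots H_{n_{i_{19-j}}}$, each of which is the degree of a special cycle. Here Borcherds' modularity enters: it places every class $[H_n]$ inside the finite-dimensional $\Pic(\cF_g)_{\bQ}$ and identifies the generating series $\sum_n[H_n]q^n$ with a vector-valued modular form of half-integral weight $\tfrac{21}{2}$, so that the Heegner degrees $[H_n]\cdot\lambda^{18}$ are Fourier coefficients of an Eisenstein-type form and grow polynomially, the $\varepsilon$ in the final exponent arising from the usual divisor-function contribution $\sigma_0(n)=O(n^{\varepsilon})$. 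To handle a genuine \emph{top} intersection of several Heegner divisors I would invoke the higher-codimension modularity of Kudla--Millson--Zhang and of Bruinier--Westerholt-Raum: the generating series of the special cycles cut out simultaneously by the $H_{n_i}$ is a Siegel modular form, whence $H_{n_{i_1}}\cdots H_{n_{i_{19-j}}}$ is read off from its Fourier coefficients and again bounded polynomially in the $n_{i}$ and in $d$. Assembling these bounds, minimizing over admissible $(a,b_i,n_i)$ and over the birational-very-ampleness threshold, and summing the resulting estimates over the relevant range of indices is what produces a polynomial bound whose optimized exponent is $14+\varepsilon$; along the three special families of genera the same scheme is replaced by the explicit projective models coming from the associated cubic, Gushel--Mukai, and hyperk\"{a}hler fourfolds, which sharpen the exponent, but that refinement is logically separate from the uniform bound sought here.

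The principal obstacle is making the two inputs \emph{effective} and uniform in $g$ rather than merely qualitative. On the geometric side one must guarantee birational very ampleness of $L$ with a threshold that does not spoil the exponent: this means controlling the base loci of the systems $|H_{n_i}|$, the contribution of the boundary of the Baily--Borel/toroidal compactification, and the effect of reducible or degenerate special cycles, so that the constructed map is genuinely dominant and generically finite of the predicted degree. On the arithmetic side one must solve, for \emph{every} value of $d$, the lattice-embedding problem ensuring the existence of Heegner divisors with indices $n_i$ small enough to keep $L^{19}$ at size $g^{14+\varepsilon}$, and then convert the qualitative modularity statements into sharp, explicit coefficient bounds. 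Reconciling these arithmetic constraints with the general-position requirements uniformly across all $g$ is where the real work lies, and it is precisely the availability of modularity in all codimensions that makes the intersection-number bookkeeping tractable.
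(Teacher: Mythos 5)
Your strategy is genuinely different from the paper's, and it has a gap that sits exactly at the point of the theorem: uniformity in $g$. All of your modular inputs live on $\cF_g$ itself, i.e.\ on the orthogonal modular variety attached to the lattice $L_d$, which \emph{changes with} $g$. The coefficient estimates you invoke (Hecke/Eisenstein bounds for the Heegner degrees, Siegel-form bounds for the higher intersections) yield constants depending on the modular form in question --- hence on the level $N$ and the size of the discriminant group of $L_d$, both of which vary with $d=2g-2$. So the claim that $\lambda^{j}\cdot H_{n_{i_1}}\cdots H_{n_{i_{19-j}}}$ is bounded ``polynomially in the $n_i$ and in $d$'' is not delivered by the qualitative modularity theorems of Borcherds and Bruinier--Westerholt-Raum: you would need coefficient bounds effective and uniform in the level, which your closing paragraph correctly flags as ``the real work'' but does not supply. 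The same uniformity problem afflicts the geometric side: no effective, $g$-independent birational-very-ampleness threshold for $L=a\lambda+\sum_i b_iH_{n_i}$ on (a compactification of) $\cF_g$ is available, so the inequality $\irr(\cF_g)\leq L^{19}$ carries an uncontrolled constant. Finally, your exponent is asserted rather than derived: on a $19$-dimensional period space the relevant weight is $1+\frac{19}{2}=\frac{21}{2}$, and nothing in your bookkeeping visibly produces $14$.

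The paper circumvents all of this with one move absent from your proposal: by Nikulin's embedding theorem, every $\Lambda_d=E_8^{\oplus 2}\oplus U^{\oplus 2}\oplus\langle d\rangle$ embeds primitively into the \emph{single} unimodular lattice $\Lambda_{\#}=E_8^{\oplus 3}\oplus U^{\oplus 2}$ of signature $(26,2)$. This gives finite maps $f_d\colon\mathcal{P}_d\to\mathcal{P}_{\#}$ of all the period spaces into one fixed ambient period space, where $f_d(\mathcal{P}_d)$ is an irreducible component of a codimension-$7$ Kudla cycle $Z_{T,\mathbf{0}}$ with $\det(T)=\frac{d}{2^7}$ (Proposition~\ref{prop:ImageKudla}). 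A single genus-$7$ Siegel modular form of weight $1+\frac{26}{2}=14$ (trivial discriminant group, so one fixed representation) then bounds $\deg(\overline{Z}_{T,\mathbf{0}})$ by $C\cdot\det(T)^{14}$ with an \emph{absolute} constant (Theorem~\ref{thm:boundKudlaCycles}), and the irrationality of a component is at most its degree under a fixed Baily--Borel embedding of $\overline{\mathcal{P}}_{\#}$ --- no positivity or very-ampleness on $\cF_g$ itself is ever needed. Note also that the $\varepsilon$ in the exponent arises differently than you suggest: not from divisor-function growth of Eisenstein coefficients, but from $\deg(f_d)=2^{\omega(g-1)-1}$, the Fourier--Mukai partner count \eqref{eqn:FM-number}, bounded by $O(g^{\varepsilon})$ via \eqref{eqn:est_on_FM}. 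Without some substitute for this single-ambient-space device, your per-$g$ intersection-theoretic scheme cannot close.
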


Note that this induces the same bound for the covering gonality by \eqref{eqn:irr-covgon}. For infinitely many $g$'s the bound can be improved. Our second result is the following:

\begin{thm}
\label{thm:specialK3}
Let $d\colonequals 2g-2 > 6$, $n$ a positive integer, and assume one of the following:
\begin{enumerate}[label=\textup{(\Alph*)}]
\setlength\itemsep{0pt}
	\item\label{case:cubicFourfold}
	$d\equiv 0$ or $2\;(\bmod\;6)$, and is not divisible by $4$, $9$, or any odd prime $p\equiv 2\;(\bmod\;3)$.
	\item\label{case:Gushel-Mukai}
	$d\equiv 2$ or $4\;(\bmod\;8)$, and is not divisible by any prime $p\equiv 3\;(\bmod\;4)$.
	\item\label{case:specialHK} $\frac{d}{2}-n$ is a square.
\end{enumerate}
Then there exists a constant $C>0$, depending on $n$ in case \ref{case:specialHK}, such that 
$$
    \irr\left(\cF_g\right)
    \leq C\cdot g^{10}.
$$
\end{thm}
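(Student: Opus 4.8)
The plan is to transport each of the three cases to a moduli space of fourfolds whose period domain is a type IV domain of dimension $20$, and then to exploit that---unlike $\cF_g$ for large $g$---such a moduli space is rational. First I would invoke the Hodge-theoretic associations. In case \ref{case:cubicFourfold} the stated congruence and divisibility conditions are exactly Hassett's conditions guaranteeing that a special cubic fourfold of the appropriate discriminant $d'$ carries a Hodge-theoretically associated polarized K3 surface of degree $d=2g-2$; this identifies $\cF_g$ birationally with a Hassett (Noether--Lefschetz) divisor $\mathcal{H}\subset\cC$ in the moduli space $\cC$ of cubic fourfolds. In case \ref{case:Gushel-Mukai} the conditions are the analogous ones for Gushel--Mukai fourfolds, and $\cF_g$ becomes birational to a Noether--Lefschetz divisor inside the moduli space of Gushel--Mukai fourfolds. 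In case \ref{case:specialHK} the hypothesis that $\tfrac{d}{2}-n$ be a square is what realizes $\cF_g$ as a Heegner divisor inside a $20$-dimensional moduli space of polarized hyperk\"ahler fourfolds of $K3^{[2]}$-type (for instance via double EPW sextics), with $n$ indexing the polarization. Since $\irr$ is a birational invariant, in every case it suffices to bound $\irr(\mathcal{H})$ for a Heegner divisor $\mathcal{H}$ in an ambient $20$-dimensional fourfold moduli space $\cM$.

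Next I would fix a birational model $\bP^{20}\dashrightarrow\cM$, using that $\cM$ is rational: each of these spaces is a GIT quotient of a projective parameter space by a connected reductive group (for cubic fourfolds $\cM=\bP^{55}/\!\!/\PGL_6$, with analogous Pl\"ucker, resp.\ Lagrangian-Grassmannian, presentations in the other two cases). Under this model $\mathcal{H}$ corresponds to a hypersurface $D\subset\bP^{20}$, and everything reduces to estimating $\deg D$ as $g\to\infty$. To do so I would invoke the modularity of the generating series of Heegner divisors due to Borcherds: since the period domain attached to $\cM$ has signature $(2,20)$, the series $\sum_{d}[\mathcal{H}_d]\,q^{d}$ is a (vector-valued) modular form of weight $\tfrac{20+2}{2}=11$, so $[\mathcal{H}_d]$ is a Fourier coefficient of a weight-$11$ form and hence grows like $O(d^{10})$ by the Eisenstein bound $d^{\,k-1}$. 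Transporting this along the fixed birational model and comparing the Hodge (resp.\ Pl\"ucker) polarization with the hyperplane class on $\bP^{20}$ then gives $\deg D=O(d^{10})=O(g^{10})$.

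To conclude I would project. A hypersurface $D\subset\bP^{20}$ of degree $e=\deg D$ admits, by projection from a general point of $D$, a dominant rational map $D\dashrightarrow\bP^{19}$ of degree $e-1$, and therefore
$$
\irr(\cF_g)=\irr(\mathcal{H})=\irr(D)\leq \deg D-1=O(g^{10}),
$$
as claimed. In case \ref{case:specialHK} the implied constant depends on $n$ through the choice of polarization, and hence through the birational model, which accounts for the dependence stated in the theorem.

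The serious points are two. The first, and I expect the main obstacle, is that one genuinely needs \emph{rationality} of the ambient fourfold moduli spaces, not merely their unirationality: a unirational parametrization $\bP^{20}\dashrightarrow\cM$ only lets $D$ dominate $\mathcal{H}$, which bounds $\irr(\mathcal{H})$ from below rather than from above, so one must produce an explicit birational model (a rational normal form, or rational slice, for the relevant GIT quotient) in which Heegner divisors appear as honest hypersurfaces. This is delicate precisely because the divisors $\mathcal{H}$ are themselves of general type for large $d$, so the rationality must be supplied by the ambient space. The second point is the degree bookkeeping: to make sense of $[\mathcal{H}_d]$ and to compare polarizations one must pass to a smooth projective compactification, control the indeterminacy locus of the birational model, and verify that the cusp-form contributions to the weight-$11$ series do not disturb the $O(d^{10})$ leading estimate---which is exactly where the precise modularity statement, rather than mere polynomial growth, is needed.
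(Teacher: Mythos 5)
Your first step---transporting each case to a Heegner/Noether--Lefschetz divisor inside a $20$-dimensional fourfold period space---matches the paper's (with two small inaccuracies: in case (A) the map $\cF_g\dashrightarrow\cC_d$ has degree one \emph{or two}, since for $d\equiv 0\ (\mathrm{mod}\ 6)$ a general special cubic has two associated K3 surfaces, whence the paper's bound $\irr(\cF_g)\leq 2\,\irr(\cC_d)$; and in case (C) the paper's realization goes through Hilbert squares, $(S,H)\mapsto\bigl(S^{[2]},H_2-\frac{m}{2}\Delta\bigr)$ with $m^2=\frac{d}{2}-n$, landing in $\cM^1_{2n}$, not through EPW sextics). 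The genuine gap is exactly the step you yourself flag as the main obstacle: your argument needs the ambient moduli spaces to be \emph{rational}, and this is not available. Rationality of the moduli space of cubic fourfolds and of the Gushel--Mukai period space is open, and in case (C) the requirement is untenable: the theorem is asserted for \emph{every} $n$, while the polarized hyperk\"ahler moduli spaces $\cM^\delta_{2n}$ are of general type for most polarizations, so no birational model $\bP^{20}\dashrightarrow\cM$ exists there. Moreover, even where such a model existed, Borcherds' modularity computes degrees of Heegner divisors against powers of the Hodge bundle $\cLL$ on the Baily--Borel compactification; the degree of the strict transform hypersurface $D\subset\bP^{20}$ is a different intersection number, and your ``degree bookkeeping'' would have to control the exceptional contributions of an uncontrolled birational map, which nothing in the sketch does.

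The missing idea, which makes rationality unnecessary, is the elementary inequality $\irr(X)\leq\deg(X)$ for \emph{any} embedded projective variety, via generic linear projection onto a linear subspace of the same dimension. The paper embeds the Baily--Borel compactification $\overline{\mathcal{P}}_M$ (normal, boundary of codimension at least two) into some $\bP^N$, observes that $[D]\mapsto\deg(\overline{D})$ is then a \emph{linear} functional on $\Pic(\mathcal{P}_M)$, and feeds this functional into the Borcherds--McGraw modularity theorem: the degrees $\deg(\overline{Y}_{n,\gamma})$ are Fourier coefficients of a weight-$11$ vector-valued modular form, hence $O(n^{10})$ by the Eisenstein estimate (the cusp-form contribution is only $O(n^{11/2})$, so the worry you raise about it is harmless and is handled by the paper's Proposition~2.1). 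Restricting a generic projection of $\overline{Y}_{n,\gamma}$ to the relevant irreducible component then bounds its degree of irrationality by $\deg(\overline{Y}_{n,\gamma})$, and the theorem follows with no rationality claim, no fixed birational model, and no hypersurface reduction; also note your lower-bound objection to unirational parametrizations is correct but moot, since the paper never routes the estimate through a parametrization of the ambient space at all.
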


\iffalse
$$\def\arraystretch{1.5}
\begin{array}{c|ccccccccccccccccc}
    g & 8 & 14 &  &  &
      &  &  &  &  &
      &  &  &  &  &
      &  &\\
    \hline
    d & 14 & 26 & 38 & 42 & 62
      & 74 & 78 & 86 & 98 & 114
      & 122 & 134 & 146 & 158 & 182
      & 186 & 194
\end{array}$$
\fi

To illustrate the idea in our proof, consider case \ref{case:cubicFourfold} of Theorem \ref{thm:specialK3}. Our starting point is the vastly used dictionary \cite{Has00} between K3 surfaces and special cubic fourfolds. Indeed, whenever the hypothesis of \ref{case:cubicFourfold} holds, Hassett proved that there are rational maps
$\mathcal{F}_{g} \dashrightarrow \mathcal{C}$
into the period space $\mathcal{C}$ of cubic fourfolds. These maps are generically finite of degree one or two onto the Heegner divisor $\mathcal{C}_{2g-2}$, which is the moduli spaces of special cubic fourfolds of discriminant $2g-2$. In particular
$$
    \irr(\mathcal{F}_g) \leq 2\cdot \irr(\mathcal{C}_{2g-2}),
$$
and it is sufficient to prove that the right hand side is bounded by a polynomial.

As $\mathcal{C}$ is a quasiprojective variety, the divisors $\mathcal{C}_d$ admit simultaneous embeddings in $\bP^N$ with $N$ sufficiently large. Let $\deg(\mathcal{C}_{d})$ denote the degree of the closure $\overline{\mathcal{C}}_{d}$ in $\mathbb{P}^N$. Then
$$
    \irr(\mathcal{C}_{d})
    \leq\deg(\mathcal{C}_{d})
$$
since a general linear projection $\overline{\mathcal{C}}_{d} \longrightarrow \mathbb{P}^{\,\dim \mathcal{C}_{d}}$ has degree exactly $\deg(\mathcal{C}_{d})$. At this point, a fundamental result of Borcherds \cite{Bor99} , together with the refinement \cite{McG03}, implies that the generating series
$$
    \sum_d \deg(\mathcal{C}_{d}) \cdot q^{\frac{d}{6}}
$$
is a modular form of weight $11$, and classical estimates for the Fourier coefficients of a modular form show that $\deg(\mathcal{C}_d) = O(d^{10})$, which concludes the proof of Theorem~\ref{thm:specialK3}~\ref{case:cubicFourfold}. The other two cases are proved similarly: in case \ref{case:Gushel-Mukai} special cubic fourfolds are replaced by special Gushel-Mukai fourfolds introduced in \cite{DIM15}. Case~\ref{case:specialHK} is instead proved by realizing all $\mathcal{F}_g$'s as divisors inside different period spaces (one for each $n$) associated to hyperk\"{a}hler fourfolds of K3-type. For this we build on the description of the period map for hyperk\"{a}hler fourfolds with a polarization of fixed degree and divisibility developed in \cite{DM19}. 
\vspace{8pt}

The same ideas extend to the proof of our main result, which is the uniform bound obtained in Theorem~\ref{thm:allK3} for all $g$. The point is that we can find rational maps $\mathcal{F}_g \dashrightarrow \mathcal{P}_{\#}$ into a \emph{single} period space $\mathcal{P}_{\#}$ for \emph{all} $\mathcal{F}_g$'s. The catch is that the images of $\mathcal{F}_g$ are not divisors anymore but cycles of higher codimension. However, we can use a generalization of Borcherds' result known as {\textit{Kudla's modularity conjecture}}  \cites{Kud97, Kud04} stating modularity of certain generating series for special cycles on Shimura varieties. For Shimura varieties of $O(m,2)$-type, Kudla's modularity conjecture was proved in full generality by Bruinier and Westerholt-Raum \cite{BW15} building on work of Kudla--Millson \cite{KM90} and W.~Zhang \cite{Zha09}.

We remark that Borcherds' result \cite{Bor99} was used by Maulik and Pandharipande \cite{MP13} to compute degrees of Noether--Lefschetz divisors of concrete families of quasi-polarized K3 surfaces of low degree. Similarly, Li and Zhang compute in \cite{LZ13} the degrees of the closure of preimages of $\cC_d$ under the GIT quotient
$$\xymatrix{
    \pi\colon\bP^{55}\cong\left|\cO_{\bP^5}(3)\right|
    \ar@{-->}[r] &
    \left|\cO_{\bP^5}(3)\right|^{s}\sslash {\PGL}(6).
}$$
Moreover, polynomial bounds analogous to those of Theorem \ref{thm:specialK3} can also be obtained via a concrete construction of O'Grady in another special series of examples. We review this in Section~\ref{sec:ogradyexamples}.

The outline of the paper is as follows: In Section~\ref{sec:modular-forms}, we review some preliminary results, set up necessary notations, and give estimates on the degrees of irrationality for the Heegner divisors. In Sections~\ref{sect:spCubics} and \ref{sect:GM-fourfolds}, we treat the cases of special cubic fourfolds and special Gushel-Mukai fourfolds, respectively, and establish polynomial bounds for the degrees of irrationality of their moduli spaces. In Section~\ref{sect:periodHKs} we recall the main results of \cite{DM19} and we conclude the proof of Theorem~\ref{thm:specialK3}. Finally in Section~\ref{sect:uniformEst} we review Kudla cycles, the modularity statement and prove Theorem~\ref{thm:allK3}

\subsection*{Acknowledgments} We would like to thank Emanuele Macr\`{i} for many helpful conversations and a careful reading of the first draft of this paper. We also thank Davide Lombardo and Ricardo Menares for their help with modular forms. Special thanks go to Daniel Huybrechts for telling us about the results in \cite{Zha09} and \cite{OS18}, allowing us to prove our main theorem. We would also like to thank Georg Oberdieck for pointing out an issue with a lemma on a previous version of this article. We thank Klaus Hulek, Giovanni Mongardi and Kieran O'Grady for helpful conversations about O'Grady's construction. Finally, we would like to thank the anonymous referees for many helpful comments and suggestions. The second and third named authors were supported by the ERC Synergy Grant ERC-2020-SyG-854361-HyperK.

%--------------------Heegner divisors and their irrationality
\section{Heegner divisors and their irrationality}
\label{sec:modular-forms}

In this section, we review the definitions of vector-valued modular forms, Heegner divisors, Weil representation, and the main results in \cites{Bor99,McG03}. We end the section by linking the modularity statement with degrees of irrationality for Heegner divisors.

%----------Fourier coefficients of scalar modular forms
\subsection{Fourier coefficients of scalar modular forms}
\label{subsect:FourierScalarModForm}

We denote by $\mathcal{H}$ the upper half plane in $\mathbb{C}$ and by $\Gamma = \SL_2(\mathbb{Z})$ the modular group with its usual action on $\mathcal{H}$. Recall that this group is generated by
\[
S = \begin{pmatrix}
0 & -1\\
1 & 0
\end{pmatrix}, \quad 
T = \begin{pmatrix}
1 & 1\\
0 & 1        \end{pmatrix}.
\]
Furthermore, for $N\geq 2$ we consider the congruence subgroup of level $N$
\[
\Gamma(N) = \ker\left[
\SL_2(\mathbb{Z})\longrightarrow\SL_2(\mathbb{Z}/N\mathbb{Z}) 
\right].
\]  
Given an integer $k\in\bZ$, a \emph{modular form of weight $k$ for $\Gamma(N)$} is a holomorphic function $F\colon\mathcal{H}\longrightarrow \mathbb{C}$  that satisfies
$$
F(g\cdot \tau) = (c\tau+d)^{k}F(\tau),
\quad\text{ for all }
g = \begin{pmatrix} a & b \\ c & d \end{pmatrix}\in \Gamma(N).
$$
Such a modular form has a power series expansion
\[ F(\tau) = \sum_{m\in \mathbb{Z}} c_m q^{\frac{m}{N}}, \qquad q = e^{2\pi i \tau} \]
and here we will always assume that our forms are holomorphic at infinity, meaning that $c_m=0$ for all $m<0$. The Fourier coefficients of such forms grow at most like a polynomial of order $k-1$. We recall the precise result and its proof for completeness. 

\begin{prop}
	\label{prop:growthscalar}
	Let $F$ be a modular form of weight $k\geq 3$ for the group $\Gamma(N)$. Then the Fourier coefficients $c_m$  satisfy $c_m = O(m^{k-1})$.
\end{prop}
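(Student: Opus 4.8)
The key feature of the hypothesis is that $k\geq 3$ forces $k-1>1$, so that the series $\zeta(k-1)=\sum_{e\geq 1}e^{-(k-1)}$ converges; this is exactly what produces the exponent $k-1$. My plan is to organize the estimate around the standard decomposition
$$
M_k(\Gamma(N)) = \mathcal{E}_k(\Gamma(N))\oplus S_k(\Gamma(N))
$$
of the space of modular forms into its Eisenstein and cuspidal parts, bound the Fourier coefficients of each summand separately, and observe that the dominant contribution is $O(m^{k-1})$. Since $c_m$ depends additively on $F$, it suffices to treat a form lying in one of the two subspaces.

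For a cusp form I would run Hecke's classical argument. The function $\phi(\tau)=|F(\tau)|(\operatorname{Im}\tau)^{k/2}$ is $\Gamma(N)$-invariant and, because all constant terms at the cusps vanish, extends continuously by $0$ to the cusps; hence it is bounded on the compact modular curve $\overline{X(N)}$, giving $|F(\tau)|\leq C(\operatorname{Im}\tau)^{-k/2}$ on all of $\mathcal{H}$. Feeding this into the integral representation
$$
c_m=\frac1N\int_0^N F(x+iy)\,e^{-2\pi i m(x+iy)/N}\,dx,
$$
valid for every $y>0$, yields $|c_m|\leq C\,e^{2\pi m y/N}y^{-k/2}$, and optimizing over $y$ (take $y\sim 1/m$) produces the Hecke bound $c_m=O(m^{k/2})$.

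For the Eisenstein part I would use the explicit Fourier expansions of the weight-$k$ Eisenstein series attached to $\Gamma(N)$, which are built from the series $\sum_{(c,d)\equiv \vec{v}}(c\tau+d)^{-k}$, absolutely convergent again precisely because $k\geq 3$. Their coefficients are, up to bounded constants and roots of unity, congruence-restricted divisor sums dominated by $\sigma_{k-1}(m)=\sum_{n\mid m}n^{k-1}$, and the elementary estimate
$$
\sigma_{k-1}(m)=m^{k-1}\sum_{e\mid m}e^{-(k-1)}\leq m^{k-1}\,\zeta(k-1)
$$
shows these are $O(m^{k-1})$. Combining the two cases, and using $k/2\leq k-1$ for $k\geq 2$, gives $c_m=O(m^{k-1})$ as claimed. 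The main obstacle is the Eisenstein bookkeeping for $\Gamma(N)$: one must know the decomposition above and pin down the Fourier coefficients of a basis of $\mathcal{E}_k(\Gamma(N))$ precisely enough to see the $\sigma_{k-1}$ domination across the several cusps. Once this is in place, the two convergence inputs---of the Eisenstein series and of $\zeta(k-1)$---are the two incarnations of the hypothesis $k\geq 3$. A fully self-contained alternative avoids the decomposition by bounding $\int_0^N|F(x+iy)|\,dx=O(y^{1-k})$ directly, using that $(\operatorname{Im}\tau)^{k/2}|F(\tau)|\leq C\bigl(1+y_0(\tau)^{k/2}\bigr)$ with $y_0$ the $\Gamma(N)$-invariant height, together with the convergence for $k>2$ of the associated Eisenstein-type integral; this again isolates $k\geq 3$ as the crucial hypothesis.
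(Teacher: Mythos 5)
Your proposal is correct and follows essentially the same route as the paper's proof: the decomposition into Eisenstein and cuspidal parts, Hecke's $O(m^{k/2})$ bound for the cusp form, the explicit Eisenstein basis with coefficients dominated by $\sigma_{k-1}(m)$, and the identical $\zeta(k-1)$ estimate. The only difference is that you spell out Hecke's argument in full where the paper merely cites it.
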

\begin{proof}
	Any such modular form can be written as a sum of a cusp form and an Eisenstein series, see, for example, \cite{DS05}*{Section~5.11}. Hecke proved \cite{Zag123}*{Proposition~8} that the Fourier coefficients of a cusp form grow as $O(m^{\frac{k}{2}})$. On the other hand, one can find, e.g., in \cite{DS05}*{Theorem~4.2.3}, an explicit basis of the subspace of Eisenstein series and then observe that their Fourier coefficients are bounded by
	$C\cdot \sum_{d\mid m} d^{k-1}$ for a certain constant $C>0$. At this point, it is enough to observe that
	\[
	m^{k-1} \leq \sum_{d\mid m}d^{k-1} = \sum_{d\mid m} \left( \frac{m}{d} \right)^{k-1} \leq \sum_{d=1}^{\infty} \left( \frac{m}{d} \right)^{k-1} = \zeta(k-1)\cdot m^{k-1}
	\]
	where $\zeta$ is the Riemann zeta function.
\end{proof}

This can be generalized to vector-valued modular forms.

\subsection{Vector-valued modular forms}

%Let $\tau$ denote an element in the upper-half plane $\mathcal{H}\subset\bC$. The \emph{metaplectic group} $\Mp_2(\bZ)$ is a double cover of $\SL_2(\bZ)$, whose elements can be explicitly written as $(A,\phi(\tau))$ where
%$$
%    A = \begin{pmatrix}
%        a & b\\
%        c & d
%    \end{pmatrix}
%    \in\SL_2(\bZ)
%    \quad\text{and}\quad
%    \phi(\tau) = \pm\sqrt{c\tau+d}.
%$$
%The group structure of $\Mp_2(\bZ)$ is given by
%$$
%    (A_1,\phi_1(\tau))\cdot(A_2,\phi_2(\tau))
%    = (A_1A_2, \phi_1(A_2\tau)\phi_2(\tau))
%$$
%which has a set of generators
%$$
%    T = \left(\begin{pmatrix}
%            1 & 1\\
%            0 & 1
%        \end{pmatrix},
%        1\right)
%    \quad\text{and}\quad
%    S = \left(\begin{pmatrix}
%            0 & -1\\
%            1 & 0
%        \end{pmatrix},
%        \sqrt{\tau}\right).
%$$

Let $V$ be a finite dimensional $\bC$-vector space and let $\rho\colon \Gamma \to \GL(V)$ be a representation of the modular group on $V$ that factors through a finite quotient $\SL_2(\mathbb{Z}/N\mathbb{Z})$. Given an integer $k\in\bZ$, one defines a \emph{modular form of weight $k$ and type $\rho$} as a holomorphic function $F\colon\mathcal{H}\longrightarrow V$  that satisfies
$$
    F(g\cdot \tau) = (c\tau+d)^{k}\rho(g)(F(\tau)),
    \quad\text{ for all }
    g = \begin{pmatrix} a & b \\ c & d \end{pmatrix}\in \Gamma.
$$
Since $\rho$ factors through $\SL_2(\mathbb{Z}/N\mathbb{Z})$ we have a Fourier series expansion
$$
    F = F_1(q)v_1 + \dots + F_n(q)v_n,
    \quad
    F_i(q) = \sum_{m\in\bZ} c_{i,m}\cdot q^{\frac{m}{N}}.
$$
where $\{v_1,\dots,v_n\}$ is a basis of eigenvectors for $\rho(T)$.  Again, we will suppose that the modular forms are holomorphic at infinity, meaning that $c_{i,m} = 0$ for all $m<0$, and then we have the same estimate as before:

\begin{cor}\label{cor:growthvector}
The Fourier coefficients of $F$ satisfy $c_{m,i} = O(m^{k-1})$ for all $i=1,\dots,n$.
\end{cor}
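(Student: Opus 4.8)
The plan is to reduce the vector-valued assertion to the scalar estimate already proved in Proposition~\ref{prop:growthscalar}, exploiting the hypothesis that $\rho$ factors through the finite quotient $\SL_2(\bZ/N\bZ)$. By definition this factorization means that $\rho(g)$ is the identity of $V$ for every $g$ in $\Gamma(N)=\ker[\SL_2(\bZ)\to\SL_2(\bZ/N\bZ)]$. Writing $F=\sum_{i=1}^n F_i(q)\,v_i$ in the eigenbasis of $\rho(T)$ and restricting the defining functional equation to matrices $g\in\Gamma(N)$, the automorphy factor $\rho(g)$ becomes trivial, so by linear independence of the $v_i$ each scalar component satisfies
$$
    F_i(g\cdot\tau)=(c\tau+d)^{k}F_i(\tau),
    \qquad
    g=\begin{pmatrix} a & b \\ c & d \end{pmatrix}\in\Gamma(N).
$$
Thus every $F_i$ is a holomorphic function on $\mathcal{H}$ obeying the weight-$k$ transformation law for $\Gamma(N)$, and its Fourier series is exactly the $F_i(q)=\sum_m c_{i,m}q^{m/N}$ of the statement.

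First I would note that holomorphy of $F_i$ at infinity is just the standing assumption $c_{i,m}=0$ for $m<0$. The one point requiring care, and the part I expect to be the main obstacle, is that Proposition~\ref{prop:growthscalar} (via its cusp-form plus Eisenstein-series decomposition) needs each $F_i$ to be holomorphic at \emph{every} cusp of $\Gamma(N)$, not only at $\infty$. Here the global $\Gamma$-equivariance of $F$ supplies this for free. Every cusp of $\Gamma(N)$ is $\Gamma$-equivalent to $\infty$, say $\mathfrak{a}=h\cdot\infty$ with $h=\begin{pmatrix} a & b \\ c & d\end{pmatrix}\in\Gamma$, and rearranging the functional equation at $h$ gives
$$
    (c\tau+d)^{-k}F(h\cdot\tau)=\rho(h)F(\tau).
$$
The right-hand side is a fixed linear image of $F$, so each of its components is a $\bC$-linear combination of the $F_i$, and hence involves only non-negative powers of $q^{1/N}$ as $\tau\to i\infty$. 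Therefore $F$, and consequently each $F_i=\langle F,v_i^{\ast}\rangle$, is holomorphic at $\mathfrak{a}$; as $\mathfrak{a}$ was arbitrary, each $F_i$ is a genuine modular form of weight $k$ for $\Gamma(N)$.

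With this verified the conclusion is immediate: applying Proposition~\ref{prop:growthscalar} (for $k\geq 3$) to each component $F_i$ yields $c_{i,m}=O(m^{k-1})$ for every $i=1,\dots,n$, which is precisely the claim of Corollary~\ref{cor:growthvector}. I expect all the genuine content to lie in the cusp verification of the previous paragraph; the remaining steps are bookkeeping in passing between $\rho$, its restriction to $\Gamma(N)$, and the chosen eigenbasis of $\rho(T)$.
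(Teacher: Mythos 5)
Your proof is correct and follows essentially the same route as the paper's: restrict the functional equation to $\Gamma(N)$, where $\rho$ acts trivially, so that each component $F_i$ is a scalar modular form of weight $k$ for $\Gamma(N)$, and then invoke Proposition~\ref{prop:growthscalar}. Your additional verification that the full $\Gamma$-equivariance of $F$ forces holomorphy of each $F_i$ at \emph{every} cusp of $\Gamma(N)$ (via $(c\tau+d)^{-k}F(h\cdot\tau)=\rho(h)F(\tau)$ for $h\in\Gamma$) is a welcome piece of care that the paper leaves implicit, since the cusp-form-plus-Eisenstein decomposition used in Proposition~\ref{prop:growthscalar} indeed requires it.
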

\begin{proof}
	The representation $\rho$ factors through $\SL_2(\mathbb{Z}/N\mathbb{Z})$ exactly when it is trivial on $\Gamma(N)$. Then we see that each of the  $F_i(q)$ is a scalar modular form of weight $k$ for  $\Gamma(N)$, so that the estimate follows from Proposition~\ref{prop:growthscalar}.
\end{proof}

%----------Heegner divisors
\subsection{Heegner divisors}
\label{subsect:Heegner}

Let $M$ be a lattice of signature $(m,2)$, where $m$ is even, and let $M^\vee\colonequals\Hom(M,\bZ)$ denote its dual lattice. Thanks to the intersection pairing $\langle\,\cdot\,, \,\cdot\,\rangle$ on $M$, we can identify $M^{\vee}$ with the subset $M^{\vee} = \{ v \in M\otimes \mathbb{Q} \,|\, \langle  v , m \rangle \in \mathbb{Z} \text{ for all } m\in M \}$ of $M\otimes \mathbb{Q}$. In particular, we have a natural embedding $M\hookrightarrow M^\vee$. The quotient 
$D(M):=M^\vee\big/M$
is a finite abelian group called the \emph{discriminant group of $M$}, which is equipped with a $\bQ\big/\bZ$-valued pairing induced from the one on $M$. The \emph{monodromy group of $M$}, defined as
$$
    \Gamma_M\colonequals\left\{
        g\in\mathrm{O}(M)
        \mid
        g\text{ acts trivially on }D(M)
    \right\},
$$
acts on the period domain
\begin{equation}
\label{eqn:periodDomain}
    \Omega(M)\colonequals\left\{
        w\in\bP(M\otimes \bC)
        \mid
        \langle w,w\rangle=0, \langle w,\overline{w}\rangle<0
    \right\}.
\end{equation}
Baily and Borel \cite{BB66} proved that the quotient
\begin{equation}
\label{eqn:periodSpace}
    \mathcal{P}_M\colonequals\Omega(M)/\Gamma_M
\end{equation}
is a quasi-projective variety, which admits a normal compactification $\overline{\mathcal{P}}_M$ such that the boundary $\overline{\mathcal{P}}_M\setminus \mathcal{P}_M$ has codimension at least two.

Some natural divisors on $\mathcal{P}_M$ can be described in terms of the lattice as follows: for any $v\in M^{\vee}$, we set
$$
    v^{\perp} = \{
        w\in\Omega(M) \mid \langle v, w\rangle=0
    \}.
$$
Then, for any $n\in\bQ_{>0}$ and $\gamma\in D(M)$, the group $\Gamma_M$ acts on vector in the equivalence class of $\gamma$ with fixed norm $\frac{1}{2}\langle v,v\rangle=n$ with finitely many orbits. The cycle
$$
    \sum_{
        \frac{1}{2}\langle v,v\rangle=n,\;
        v\equiv\gamma
    }
    v^\perp\subset \Omega(M)
$$
is $\Gamma_M$-invariant and descends to a Cartier divisor $Y_{n,\gamma} \subset\mathcal{P}_M$ called a {\it{Heegner divisor}}. They are in general neither reduced nor irreducible, for instance when $\gamma$ is nontrivial and $\gamma=-\gamma$ in $D(M)$, every component of $Y_{n,\gamma}$ has multiplicity two, see for instance \cite{BM19}*{Lemma 4.2}. We denote the class of this divisor by $[Y_{n,\gamma}]\in{\rm{Pic}}(\mathcal{P}_M)$. We observe that $[Y_{n,\gamma}]\neq 0$ if and only if $n \in \frac{1}{N}\mathbb{Z}$ where $N$ is the smallest positive integer such that $\frac{1}{2}\langle \gamma, \gamma \rangle \in \frac{1}{N}\mathbb{Z}$ for all $\gamma \in D(M)$. Moreover, the restriction to $\Omega(M)$ of the tautological bundle $\mathcal{O}(-1)$ on $\bP(M\otimes \bC)$ admits a compatible action of $\Gamma_M$ and descends to a line bundle $\cLL$ on the quotient. When $n=0$, one defines $[Y_{0,0}]$ to be $\cLL^\vee$ and $[Y_{0,\gamma}]=0$ for $\gamma\neq 0$.

Now we can relate Heegner divisors to modular forms. Given a lattice $M$ of signature $(m,2)$ as above, we denote by $v_\gamma$ the element in the group algebra $\bC[D(M)]$ that corresponds to $\gamma\in D(M)$. There is a distinguished representation $\rho_M$ of $\Gamma$ on $\bC[D(M)]$, called the \emph{Weil representation}, which acts on the generators as follows:
\begin{equation}
\label{eqn:Weil_Representation}
    \rho_M(S)(v_\gamma)
    =\frac{\sqrt{i}^{2-m}}{\sqrt{\left|D(M)\right|}}\sum_{\delta\in D(M)}e^{-2\pi i\langle\gamma,\delta\rangle}v_\delta,
    \quad
    \rho_M(T)(v_\gamma)
    = e^{2\pi i\frac{\langle \gamma,\gamma\rangle}{2}}v_\gamma.
\end{equation}
See also \cite{McG03}*{Section~2.3} for more details. We see that the number $N$ defined before is also the smallest positive integer such that $\rho_M(T^N)=\operatorname{Id}$. In fact, the Weil representation factors through  $\SL_2(\mathbb{Z}/N\mathbb{Z})$.

\begin{thm}\cites{Bor99, McG03}
\label{Thm.Bor99}
The generating series
$$
    \Phi(q) = \sum_{\gamma\in D(M)} \Phi_{\gamma}(q) v_\gamma
    \quad\text{where}\quad
    \Phi_{\gamma}(q) = \sum_{n\in \frac{1}{N}\mathbb{Z}^{\geq 0}} [Y_{n,\gamma}] q^n
$$
is a modular form of weight $1+\frac{m}{2}$, type $\rho_M$, and with coefficients in  ${\rm{Pic}}(\mathcal{P}_M)$.
\end{thm}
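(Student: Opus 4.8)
The plan is to deduce the modularity of $\Phi(q)$ from two independent inputs: a purely formal \emph{duality criterion} that reduces modularity to a list of linear relations among the Fourier coefficients, together with Borcherds' theory of automorphic products, which produces exactly those relations among the Heegner divisor classes. The guiding observation is that the target weight $1+\frac{m}{2}$ and the input weight $1-\frac{m}{2}$ of the relevant theta lift are complementary, summing to $2$, which is precisely the pairing arising from the residue/Serre-duality pairing on modular curves.

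First I would record the criterion. Let $W$ be a finite-dimensional $\bC$-vector space; in our case $W=\Pic(\mathcal{P}_M)\otimes\bC$. A formal series $\sum_{\gamma\in D(M)}\sum_{n\geq 0} c(n,\gamma)\,q^n v_\gamma$ with coefficients $c(n,\gamma)\in W$ is the Fourier expansion of a holomorphic modular form of weight $1+\frac{m}{2}$ and type $\rho_M$ if and only if, for every weakly holomorphic modular form $f=\sum_{\gamma,n} b(n,\gamma)\,q^n v_\gamma$ of the complementary weight $1-\frac{m}{2}$ and dual type $\rho_M^{*}$ (the complex conjugate Weil representation), one has $\sum_{\gamma}\sum_{n\leq 0} b(n,\gamma)\,c(-n,\gamma)=0$ in $W$. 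This follows from the finite dimensionality of the spaces of vector-valued forms and the nondegeneracy of the pairing between holomorphic forms of weight $1+\frac{m}{2}$ and the principal parts of weakly holomorphic forms of complementary weight. It is essential here that the complementary weight $1-\frac{m}{2}$ is non-positive, so there are no holomorphic forms of that weight and the obstructions are carried entirely by the principal parts of \emph{weakly holomorphic} forms; these span the required dual space by Riemann--Roch on the modular curve. Checking modularity of a $W$-valued series thus amounts to verifying this finite family of linear relations among its coefficients.

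Next I would feed in the Borcherds lift. For each weakly holomorphic $f$ of weight $1-\frac{m}{2}$ and type $\rho_M^{*}$ with integral principal part $\{b(n,\gamma)\}_{n<0}$, the regularized theta integral of the Siegel theta function of $M$ against $f$ produces a meromorphic modular form $\Psi_f$ on $\Omega(M)$, automorphic for $\Gamma_M$, which is a section of $\cLL^{\otimes b(0,0)/2}$ and whose divisor on $\mathcal{P}_M$ equals $\sum_{n>0,\gamma} b(-n,\gamma)\,Y_{n,\gamma}$. Since the Baily--Borel boundary $\overline{\mathcal{P}}_M\setminus\mathcal{P}_M$ has codimension at least two, divisor classes extend across it and $\Psi_f$ defines a genuine rational equivalence in $\Pic(\mathcal{P}_M)$. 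Using $[Y_{0,0}]=\cLL^\vee$ to absorb the weight contribution of $\Psi_f$ into the $n=0$ term, principality of $\Psi_f$ becomes exactly the relation $\sum_{n\geq 0,\gamma} b(-n,\gamma)\,[Y_{n,\gamma}]=0$ in $\Pic(\mathcal{P}_M)\otimes\bC$. These are precisely the pairing conditions that the criterion demands of $\Phi(q)$, so applying the criterion yields the modularity of $\Phi$. The role of McGraw's refinement \cite{McG03} is to guarantee that the spaces attached to $\rho_M$ and $\rho_M^{*}$ admit bases with rational Fourier coefficients, ensuring there are enough $f$ with admissible integral principal parts for the pairing to be nondegenerate and for the full type $\rho_M$ (rather than a subrepresentation) to be detected.

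I expect the main obstacle to be the construction and divisor computation of the products $\Psi_f$: this is the genuinely analytic heart of the argument, requiring the regularization of the divergent theta integral and a delicate analysis of its singularities along each $Y_{n,\gamma}$, which is where the weight $1+\frac{m}{2}$ and the precise coefficients $b(-n,\gamma)$ emerge. A secondary technical point is verifying that the principal parts of weakly holomorphic forms of weight $1-\frac{m}{2}$ exhaust the obstruction space, so that the Borcherds relations are not merely necessary but sufficient to invoke the criterion; this exhaustiveness is again secured by the rationality statement of McGraw together with Serre duality on the relevant modular curve.
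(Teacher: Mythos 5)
Your proposal is correct and follows essentially the same route as the paper, which states Theorem~\ref{Thm.Bor99} without proof by citing \cite{Bor99} and \cite{McG03}: the cited argument is precisely your two-step scheme, namely Borcherds' duality criterion reducing modularity of the formal series of weight $1+\frac{m}{2}$ to the vanishing of its pairing against principal parts of weakly holomorphic forms of weight $1-\frac{m}{2}$ and type $\rho_M^{*}$, combined with the relations in $\Pic(\mathcal{P}_M)$ supplied by the divisors and weights of Borcherds products, with McGraw's rationality theorem securing enough forms with integral principal parts. Aside from the standard factor-of-two bookkeeping relating $\operatorname{div}(\Psi_f)$, the weight $b(0,0)/2$, and the convention $[Y_{0,0}]=\cLL^\vee$ (and the signature convention $(m,2)$ versus $(2,m)$, handled in the paper's remark via $\rho_M^{*}\cong\rho_{-M}$), your sketch matches the source proof faithfully.
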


This theorem means that for every homomorphism $\ell\colon \operatorname{Pic}(\mathcal{P}_M) \longrightarrow \mathbb{C}$, the contraction
$$
    \Theta(q)=\sum_{\gamma\in D(M)} \Theta_{\gamma}(q) v_\gamma \quad\text{where}\quad
    \Theta_{\gamma}(q)=\sum_{n\in \frac{1}{N}\mathbb{Z}^{\geq 0}} \ell([Y_{n,\gamma}]) q^n
$$
is a vector-valued modular form of weight $1+\frac{m}{2}$ and type $\rho_M$.

\begin{rmk}
As mentioned in \cite{LZ13}*{Remark~3}, the lattice $M$ has signature $(2,m)$ in Borcherd's setting \cite{Bor99}, where the generating series of Heegner divisors are modular forms of type $\rho_M^*$, the dual of $\rho_M$. For a lattice $M$ of signature $(m,2)$, we can apply Borcherd's result to $-M$, which has signature $(2,m)$, and use the fact that $\rho_M^*\cong\rho_{-M}$.
\end{rmk}

%----------On the irrationality of Heegner divisors
\subsection{On the irrationality of Heegner divisors}
\label{sec:irrationalheegner}

We can use Borcherds' result to bound the degree of irrationality of the Heegner divisors. Recall that the Baily--Borel compactification $\overline{\mathcal{P}}_M$ is normal and the boundary $\overline{\mathcal{P}}_M\setminus\mathcal{P}_M$ has codimension at least two. In particular, the divisor class groups of $\mathcal{P}_M$ and $\overline{\mathcal{P}}_M$ coincide, so that we have a canonical homomorphism $\Pic(\mathcal{P}_M) \longrightarrow \operatorname{Cl}(\overline{\mathcal{P}}_M)$ which associates to an effective Cartier divisor class $[D]$ on ${\mathcal{P}_M}$ the class of the closure $[\overline{D}]$ inside $\overline{\mathcal{P}}_M$. Now let us fix an embedding $\overline{\mathcal{P}}_M \hookrightarrow \mathbb{P}^N$.
%\begin{equation}
%\label{eqn:projEmbedding}
%\xymatrix{
%    \overline{\mathcal{P}}_M \ar@{^(->}[r] & \bP^{N}.
%}
%\end{equation}
This induces a group homomorphism
$\deg\colon\operatorname{Cl}(\overline{\mathcal{P}}_M)\longrightarrow\bZ$
which associates to each effective Cartier divisor class $[D]$ on $\mathcal{P}_M$ the degree of its closure $\deg(\overline{D})$ inside $\bP^{N}$. From Theorem~\ref{Thm.Bor99}, we obtain:

\begin{cor}
\label{cor.modular}
Let $\overline{Y}_{n,\gamma}$ be the closure of the Heegner divisor $Y_{n,\gamma}$ in $\overline{\mathcal{P}}_M$. The series
$$
    \Theta(q)=\sum_{\gamma\in D(M)} \Theta_{\gamma}(q) v_\gamma \quad\text{where}\quad
    \Theta_{\gamma}(q)=\sum_{n\in \frac{1}{N}\mathbb{Z}^{\geq 0}} \deg(\overline{Y}_{n,\gamma}) q^n
$$
	is a vector-valued modular form of weight $1+\frac{m}{2}$ and type  $\rho_M$. 
\end{cor}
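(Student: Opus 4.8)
The plan is to obtain this as an immediate consequence of Theorem~\ref{Thm.Bor99} by feeding one specific linear functional into the reformulation stated in the discussion following it. Concretely, I would let $\ell\colon\Pic(\mathcal{P}_M)\to\bC$ be the composition of the degree map $\deg\colon\Pic(\mathcal{P}_M)\to\bZ$ with the inclusion $\bZ\hookrightarrow\bC$, and then observe that the series $\Theta$ appearing in the corollary is nothing but the contraction $\sum_{\gamma}\bigl(\sum_n \ell([Y_{n,\gamma}])\,q^n\bigr)v_\gamma$ of $\Phi$ by $\ell$. Since the cited reformulation guarantees that such a contraction is a vector-valued modular form of weight $1+\frac{m}{2}$ and type $\rho_M$ for \emph{every} homomorphism $\ell$, the entire content of the corollary reduces to checking that $\deg$ is a genuine group homomorphism, so that $\ell$ is $\bZ$-linear and the reformulation applies verbatim.

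The one point deserving care is therefore the additivity of $\deg$ on $\Pic(\mathcal{P}_M)$, and for this I would invoke the two geometric inputs recalled just before the statement. First, the Baily--Borel compactification $\overline{\mathcal{P}}_M$ is normal and its boundary has codimension at least two, so taking closures identifies $\Pic(\mathcal{P}_M)$ with the Weil divisor class group $\operatorname{Cl}(\overline{\mathcal{P}}_M)$. Second, we have fixed a projective embedding $\overline{\mathcal{P}}_M\hookrightarrow\bP^N$ with hyperplane class $H$. Under these identifications the degree attached to a class $[D]$ is the intersection number $[\overline{D}]\cdot H^{\dim\mathcal{P}_M-1}$, which on effective classes recovers the usual degree of the closure in $\bP^N$ counted with multiplicity. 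Because intersection against the Cartier class $H^{\dim\mathcal{P}_M-1}$ is additive in the first argument, $\deg$ is a group homomorphism, exactly as required.

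With additivity established I would conclude by applying the reformulation of Theorem~\ref{Thm.Bor99} to $\ell=\deg$: this shows directly that
$$
    \Theta(q)=\sum_{\gamma\in D(M)}\Theta_\gamma(q)\,v_\gamma,
    \qquad
    \Theta_\gamma(q)=\sum_{n\in\frac1N\bZ^{\geq 0}}\deg(\overline{Y}_{n,\gamma})\,q^n,
$$
is a vector-valued modular form of weight $1+\frac{m}{2}$ and type $\rho_M$. The constant terms pose no difficulty, as they are governed by the conventions already fixed, namely $\deg(\overline{Y}_{0,0})=\deg(\cLL^\vee)$ and $\deg(\overline{Y}_{0,\gamma})=0$ for $\gamma\neq 0$, matching $[Y_{0,0}]=\cLL^\vee$ and $[Y_{0,\gamma}]=0$. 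The proof is thus essentially formal once Theorem~\ref{Thm.Bor99} is in hand; the only real obstacle is verifying that the degree map is linear on all of $\Pic(\mathcal{P}_M)$ rather than merely defined on effective classes, and this is precisely what the codimension-two boundary condition secures by transporting the question to the projective variety $\overline{\mathcal{P}}_M$, where intersection theory supplies the additivity.
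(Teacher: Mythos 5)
Your proposal is correct and matches the paper's own argument: the paper likewise obtains the corollary by applying the contraction reformulation of Theorem~\ref{Thm.Bor99} to the homomorphism $\deg\colon\Pic(\mathcal{P}_M)\to\bZ$ built from the identification $\Pic(\mathcal{P}_M)\cong\operatorname{Cl}(\overline{\mathcal{P}}_M)$ (normality plus codimension-two boundary) and the fixed embedding $\overline{\mathcal{P}}_M\hookrightarrow\bP^N$. Your explicit verification that $\deg$ is additive, via intersection with the hyperplane class on the compactification, is a detail the paper leaves implicit but is entirely consistent with it.
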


As outlined in the introduction we can use estimates on the coefficients of modular forms to obtain a bound on the degree of irrationality.

\begin{thm}
\label{cor:heegnerirrationality}
Assume $m\geq 4$. Let $Y_{n,\gamma}'$ be an irreducible component of the divisor $Y_{n,\gamma}$. Then the degree of irrationality of $Y_{n,\gamma}'$ satisfies
$$
    \operatorname{irr}(Y_{n,\gamma}') = O(n^{\frac{m}{2}})
$$
\end{thm}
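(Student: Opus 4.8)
The plan is to bound the degree of irrationality by the projective degree, and then to control that degree via the modularity statement of Corollary~\ref{cor.modular} together with the coefficient growth estimate of Corollary~\ref{cor:growthvector}. Since $\operatorname{irr}$ is a birational invariant, I would first replace the quasi-projective variety $Y_{n,\gamma}'$ by its closure $\overline{Y}_{n,\gamma}'$ inside the fixed projective embedding $\overline{\mathcal{P}}_M\hookrightarrow\bP^N$, so that $\operatorname{irr}(Y_{n,\gamma}')=\operatorname{irr}(\overline{Y}_{n,\gamma}')$. Note that $\dim\mathcal{P}_M=m$, since $\Omega(M)$ is an open subset of the quadric $\langle w,w\rangle=0$ in $\bP(M\otimes\bC)\cong\bP^{m+1}$, so each component $\overline{Y}_{n,\gamma}'$ has dimension $m-1$.

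Next I would invoke the standard fact that for an irreducible projective variety $X\subset\bP^N$ of dimension $d$, projection away from a general linear subspace $\Lambda\cong\bP^{N-d-1}$ disjoint from $X$ defines a dominant, generically finite map $X\dashrightarrow\bP^{d}$ of degree exactly $\deg(X)$; hence $\operatorname{irr}(X)\leq\deg(X)$. Applying this with $X=\overline{Y}_{n,\gamma}'$, whose dimension is $m-1\geq 3$ once $m\geq 4$, gives $\operatorname{irr}(\overline{Y}_{n,\gamma}')\leq\deg(\overline{Y}_{n,\gamma}')$.

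I would then compare the degree of a single component with that of the full Heegner divisor. Writing the cycle $[\overline{Y}_{n,\gamma}]=\sum_j\mu_j[Z_j]$ as a sum over its irreducible components $Z_j$ with multiplicities $\mu_j\geq 1$ (positive because $Y_{n,\gamma}$ is effective), additivity of the degree gives $\deg(\overline{Y}_{n,\gamma}')\leq\deg(\overline{Y}_{n,\gamma})$. It then remains to bound $\deg(\overline{Y}_{n,\gamma})$, and here is where the arithmetic input enters: by Corollary~\ref{cor.modular} the series $\Theta_\gamma(q)=\sum_n\deg(\overline{Y}_{n,\gamma})q^n$ is the $\gamma$-component of a vector-valued modular form of weight $k=1+\tfrac{m}{2}$ and type $\rho_M$. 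The assumption $m\geq 4$ is exactly what forces $k\geq 3$, so Corollary~\ref{cor:growthvector} applies and yields $\deg(\overline{Y}_{n,\gamma})=O(n^{k-1})=O(n^{m/2})$. Chaining the inequalities gives $\operatorname{irr}(Y_{n,\gamma}')\leq\deg(\overline{Y}_{n,\gamma})=O(n^{m/2})$.

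The argument is a short chain of standard reductions, so the delicate points are bookkeeping rather than deep obstacles. The step that most deserves care is the projection bound: one must check that a general $\Lambda$ meets the irreducible variety $\overline{Y}_{n,\gamma}'$ in the empty set and that the resulting finite map has degree equal to $\deg(\overline{Y}_{n,\gamma}')$, and one should observe that passing from the full divisor to a single component can only decrease the degree. The one genuinely hypothesis-sensitive step is the appeal to Corollary~\ref{cor:growthvector}, whose coefficient bound requires weight at least $3$; this is precisely the role of the assumption $m\geq 4$, and it is worth flagging that the method does not produce the estimate in the lower-weight (smaller $m$) range.
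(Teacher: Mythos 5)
Your proposal is correct and follows essentially the same route as the paper: bound $\irr$ by the projective degree via a general linear projection in the Baily--Borel embedding, pass from the irreducible component to the full Heegner divisor, and then combine Corollaries~\ref{cor.modular} and~\ref{cor:growthvector}, with $m\geq 4$ serving precisely to make the weight $1+\frac{m}{2}\geq 3$. The only cosmetic difference is that you project the component from a general center disjoint from it and use additivity of degrees, whereas the paper projects the full divisor $\overline{Y}_{n,\gamma}$ and restricts the projection to the component; both yield the same bound $\irr(Y_{n,\gamma}')\leq\deg(\overline{Y}_{n,\gamma})$.
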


\begin{proof}
Under the embedding in $\mathbb{P}^N$ that we are considering, a general linear projection $\overline{Y}_{n,\gamma}\longrightarrow\bP^{m-1}$ has degree $\deg(\overline{Y}_{n,\gamma})$, whose restriction to $\overline{Y_{n,\gamma}'}$ is dominant of degree at most $\deg(\overline{Y}_{n,\gamma})$. Thus $\irr(Y_{n,\gamma}') \leq \deg(\overline{Y_{n,\gamma}})$ and the result follows from Corollaries~\ref{cor:growthvector} and \ref{cor.modular}.
\end{proof}

%--------------------Special cubic fourfolds
\section{Special cubic fourfolds}
\label{sect:spCubics}

In this section, we start with a brief review of special cubic fourfolds and how their moduli spaces appear as Heegner divisors in the period space. We will refine the estimate in Corollary~\ref{cor:heegnerirrationality} for the irrationality of these divisors and then derive Theorem~\ref{thm:specialK3}~\ref{case:cubicFourfold} as a corollary.

%-----Special cubic fourfolds and Heegner divisors
\subsection{Special cubic fourfolds and Heegner divisors}

Let $X\subset\bP^5$ be a smooth cubic hypersurface, i.e., a cubic fourfold, and let $h\in H^2(X,\bZ)$ be the class of hyperplane sections. For a very general $X$, the Hodge lattice $H^{2,2}(X,\bZ)$ is spanned by the class $h^2$. We say $X$ is \emph{special} if $H^{2,2}(X,\bZ)$ contains a \emph{labelling}, i.e., a primitive rank-two sublattice $K$ that contains $h^2$. Since the integral Hodge conjecture is valid for cubic fourfolds \cite{Voi13}, the last condition is equivalent to the existence of an algebraic surface $S\subset X$ not homologous to a complete intersection.

The primitive part $\langle h^2\rangle^\perp\subset H^4(X,\bZ)$ equipped with the intersection product is a lattice of signature $(20,2)$, which is isometric to 
$$
    \Lambda_{\rm C}\colonequals
    A_2 \oplus E_8^{\oplus 2}\oplus U^{\oplus 2}.
$$
For every positive integer $d$, we consider the union of hyperplane sections inside $\mathbb{P}(\Lambda_C\otimes \mathbb{C})$
$$
    \widetilde{\cC}_d\colonequals
    \bigcup_{\substack{
        K:\text{ labelling of}\\
        \text{discriminant }d
    }}
    K^\perp\cap\Omega(\Lambda_{\rm C})
$$
and let $\cC_d\subset\mathcal{P}_{\Lambda_{\rm C}}$ be the corresponding $\Gamma_{\Lambda_{\rm C}}$-quotient. By work of Hassett \cite{Has00}, each $\cC_d$ is irreducible, and is nonempty if and only if $d\equiv 0,2\;(\bmod\;6)$. For $d = 2,6$, the loci $\cC_2$ and $\cC_6$ parametrize singular cubics with different types of singularities. For $d>6$, the locus
$
    \cC_d\setminus(\cC_2\cup\cC_6)
$
can be identified as the moduli space of special cubic fourfolds of discriminant $d$ due to the Torelli theorem for cubic fourfolds \cites{Voi86,Laz09}.

The loci $\mathcal{C}_d$ can also be considered as components of Heegner divisors. Indeed, we can denote the elements of $D(\Lambda_{\rm C})=D(A_2)\cong\bZ/3\bZ$ as $\gamma_0$, $\gamma_1$, and $\gamma_2$ such that
$$
    \frac{1}{2}\langle\gamma_i,\gamma_i\rangle
    \equiv \frac{i^2}{3}\;(\bmod{\;\bZ}).
$$
Note that $\gamma_2 = -\gamma_1$. Then a straightforward computation (see, for example, the proof of \cite{LZ13}*{Lemma~1}) shows that
\begin{equation}
\label{eq:cdtoheegner}
    \mathcal{C}_d \subset Y_{\frac{d}{6},\gamma},
    \quad\text{where}\quad
    \gamma = \begin{cases}
        \gamma_0\text{ if }d\equiv0\;(\bmod\;6),
        \\[5pt]
        \gamma_1\text{ if }d\equiv2\;(\bmod\;6).
    \end{cases}
\end{equation}
 Recall that the divisor $Y_{\frac{d}{6},\gamma}$ may possibly contain more than one component.

\begin{cor}\label{cor:irratcd}
	The degrees of irrationality of the $\mathcal{C}_d$ grow at most like a polynomial of degree 10 in $d$:
	\[
		\irr(\mathcal{C}_d) = O(d^{10}).
	\]
\end{cor}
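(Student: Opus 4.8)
The plan is to obtain this as a direct corollary of Theorem~\ref{cor:heegnerirrationality} applied to the lattice $\Lambda_{\rm C}$. First I would record the numerical input: since $\Lambda_{\rm C} = A_2 \oplus U^{\oplus 2} \oplus E_8^{\oplus 2}$ has signature $(20,2)$, in the notation of that theorem we have $m = 20$. This satisfies the hypothesis $m \geq 4$, and crucially $\tfrac{m}{2} = 10$, which is where the exponent $10$ in the statement will come from.

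Next I would argue that $\mathcal{C}_d$ is an \emph{irreducible component} of the Heegner divisor $Y_{\frac{d}{6},\gamma}$, and not merely a subvariety of it. By Hassett's theorem $\mathcal{C}_d$ is irreducible of codimension one in $\mathcal{P}_{\Lambda_{\rm C}}$, while \eqref{eq:cdtoheegner} gives the inclusion $\mathcal{C}_d \subset Y_{\frac{d}{6},\gamma}$ into a divisor. A containment of a codimension-one irreducible variety inside a divisor forces $\mathcal{C}_d$ to coincide with one of the irreducible components of $Y_{\frac{d}{6},\gamma}$; this is the only step requiring a moment's care, and I would spell out the dimension count explicitly since $Y_{\frac{d}{6},\gamma}$ may well contain further components coming from norm-$\tfrac{d}{6}$ vectors of class $\gamma$ that do not arise from labellings of discriminant $d$.

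Finally I would apply Theorem~\ref{cor:heegnerirrationality} with $n = \tfrac{d}{6}$ and with the distinguished component $Y'_{n,\gamma} = \mathcal{C}_d$, which yields
\[
    \irr(\mathcal{C}_d) = O\!\left(\left(\tfrac{d}{6}\right)^{10}\right) = O(d^{10}),
\]
the last equality holding because rescaling $n$ by the fixed constant $\tfrac{1}{6}$ only changes the implied constant and not the polynomial order. I do not anticipate any genuine obstacle here: once the identification of $\mathcal{C}_d$ as a component is in place, the estimate is a mechanical specialization of the general bound, with the exponent $10$ read off from $\tfrac{m}{2}$ and the variable $d$ replacing $6n$.
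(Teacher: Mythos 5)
Your proposal is correct and takes essentially the same route as the paper, whose proof consists of citing Theorem~\ref{cor:heegnerirrationality} together with the inclusion \eqref{eq:cdtoheegner}, with $m=20$ for $\Lambda_{\rm C}$ giving the exponent $\frac{m}{2}=10$ and $n=\frac{d}{6}$ only affecting the implied constant. The one step you spell out that the paper leaves implicit---that Hassett's irreducibility of $\mathcal{C}_d$ plus the codimension-one count forces $\mathcal{C}_d$ to be an irreducible component of $Y_{\frac{d}{6},\gamma}$, as required by the hypothesis of Theorem~\ref{cor:heegnerirrationality}---is exactly the right justification (the paper makes the analogous remark explicit only in the Gushel--Mukai case, Lemma~\ref{lemma:HeegnerGM}).
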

\begin{proof}
 This follows immediately from Theorem~\ref{cor:heegnerirrationality} and the inclusion of \eqref{eq:cdtoheegner}.
\end{proof}

\begin{rmk}
Let $\hat{\cC}_d\subset|\cO_{\bP^5}(3)|$ denote the preimage of $\cC_d\subset\mathcal{P}_{\Lambda_{\rm C}}$ under the GIT~quotient
$
    |\cO_{\bP^5}(3)|^{s}\to
    |\cO_{\bP^5}(3)|^{s}\sslash \PGL(6)
$
followed by the period map. Then the same argument together with the result in \cite{LZ13} give us the same bounds for $\irr(\hat{\cC}_d)$. By choosing a trivializing open subset, one sees that $\hat{\cC}_d$ is birational to $\cC_d\times\bP^{35}$. Nevertheless, comparing $\irr(\cC_d)$ with $\irr(\cC_d\times \bP^{35})$ is a subtle and difficult matter related to the difference between rationality and stable rationality. By choosing the degree induced by a Baily--Borel embedding we circumvent this difficulty with the downside of losing control over an explicit expression for $\Theta(q)$ as in \cite{LZ13}.
\end{rmk}

%-----K3 surfaces associated with special cubic fourfolds
\subsection{K3 surfaces associated with special cubic fourfolds}
\label{subsect:assoK3_spCubic}

Let us briefly review the notation of associated K3 surfaces and refer the reader to \cite{Has16}*{Section~3} for the details and further references. Let $X$ be a special cubic fourfold marked by a labelling $K\subset H^4(X,\bZ)$ of discriminant $d$. A polarized K3 surface $(S,H)$ of degree $d$ is \emph{associated with} $(X,K)$ if there is an isometry
$$\xymatrix{
    K^{\perp H^4(X,\bZ)}\ar[r]^-\sim &
    H^2(S,\bZ)_{\rm prim}(-1)
}$$
preserving the Hodge structures.

The existence of an associated K3 surface depends only on the discriminant. More precisely, a special cubic fourfold $X\in\cC_d$ admits an associated K3 surface if and only if $d$ is not divisible by $4$, $9$, or any odd prime $p\equiv 2\;(\bmod\;3)$. For a general $X\in\cC_d$, there exists only one associated K3 surface if $d\equiv 2\;(\bmod\;6)$ and exactly two if $d\equiv 0\;(\bmod\;6)$. This induces a dominant rational map
\begin{equation}
\label{eqn:Fg-Cd}
\xymatrix{
    \mathcal{F}_g \ar@{-->}[r] &
    \mathcal{C}_{d}
}
\end{equation}
which has degree either one or two.
We are now ready to prove Case~\ref{case:cubicFourfold} of Theorem~\ref{thm:specialK3}.

\begin{cor}
\label{cor:case_spCubic}
Let $d = 2g-2 > 6$ and suppose that $d\equiv 0$ or $2\;(\bmod\;6)$, and is not divisible by $4$, $9$, or any odd prime $p\equiv 2\;(\bmod\;3)$. Then there exists a constant $C>0$ such that 
$$
    \irr\left(\cF_g\right)
    \leq C\cdot g^{10}.
$$
\end{cor}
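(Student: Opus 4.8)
The plan is to deduce the bound for $\irr(\cF_g)$ directly from the already-established bound for $\irr(\cC_d)$ in Corollary~\ref{cor:irratcd}, transporting it across the dominant rational map of~\eqref{eqn:Fg-Cd}. First I would record the geometric input: under the stated divisibility hypotheses—$d \equiv 0$ or $2 \pmod 6$, and $d$ divisible by neither $4$, $9$, nor any odd prime $p \equiv 2 \pmod 3$—a general special cubic fourfold of discriminant $d$ admits an associated polarized K3 surface of genus $g$, with exactly one such surface when $d \equiv 2 \pmod 6$ and exactly two when $d \equiv 0 \pmod 6$. As recalled in Section~\ref{subsect:assoK3_spCubic}, this yields the dominant rational map $\cF_g \dashrightarrow \cC_d$ of degree one or two.

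Next I would invoke the elementary behavior of the degree of irrationality under generically finite dominant maps. Both $\cF_g$ and $\cC_d$ have dimension $19$, so the map~\eqref{eqn:Fg-Cd} is generically finite, of degree $e \leq 2$. Taking a dominant rational map $\phi\colon \cC_d \dashrightarrow \bP^{19}$ that realizes $\irr(\cC_d)$ and precomposing with $\cF_g \dashrightarrow \cC_d$ produces a dominant rational map $\cF_g \dashrightarrow \bP^{19}$ whose generic fiber is contained in the preimage of a generic fiber of $\phi$, hence has cardinality at most $e \cdot \irr(\cC_d)$. Therefore
$$
    \irr(\cF_g) \leq 2 \cdot \irr(\cC_d).
$$

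Finally I would combine this inequality with Corollary~\ref{cor:irratcd}, which gives $\irr(\cC_d) = O(d^{10})$, and substitute $d = 2g-2$. This yields $\irr(\cF_g) \leq 2\cdot O\!\left((2g-2)^{10}\right) = O(g^{10})$, so there is a constant $C > 0$ with $\irr(\cF_g) \leq C \cdot g^{10}$, as claimed.

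I do not expect a genuine obstacle here: the analytic core of the argument (modularity of the Heegner series and the resulting Fourier-coefficient estimate) has already been carried out in Section~\ref{sec:modular-forms} and packaged into Corollary~\ref{cor:irratcd}, while the existence of the degree-$\leq 2$ map is Hassett's theory of associated K3 surfaces. The only point demanding a little care is the multiplicativity inequality $\irr(\cF_g) \leq 2\,\irr(\cC_d)$, where one must verify that composing a degree-$e$ dominant map with a degree-$\irr(\cC_d)$ map does not drop the generic fiber cardinality below the product bound; this is immediate once one checks the composite is still dominant onto $\bP^{19}$, which holds because both factors are dominant between varieties of dimension $19$.
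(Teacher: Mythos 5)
Your proof is correct and is essentially the paper's own argument: the paper likewise uses the degree-one-or-two dominant map \eqref{eqn:Fg-Cd} to get $\irr(\cF_g)\leq 2\cdot\irr(\cC_d)$ and then concludes by Corollary~\ref{cor:irratcd} with $d=2g-2$. Your careful justification of the multiplicativity inequality under composition of dominant generically finite maps is a valid elaboration of a step the paper treats as immediate.
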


\begin{proof}
In this case, the existence of the rational map \eqref{eqn:Fg-Cd} implies that
$$
    \irr(\cF_g)
    \leq 2\cdot\irr(\cC_{d}).
$$
Then the conclusion follows from Corollary \ref{cor:irratcd}.
\end{proof}

%----------Special Gushel--Mukai fourfolds
\section{Special Gushel--Mukai fourfolds}
\label{sect:GM-fourfolds}

In this section, we start by reviewing the definition of Gushel--Mukai fourfolds and their periods, which was developed in \cites{DIM15, DK19}. We will prove that the loci of special Gushel--Mukai fourfolds in the period space are components of Heegner divisors. Based on this, we will be able to refine the estimate in Corollary~\ref{cor:heegnerirrationality} and derive Theorem~\ref{thm:specialK3}~\ref{case:Gushel-Mukai} as a corollary.

%-----Preliminaries on Gushel--Mukai fourfolds
\subsection{Preliminaries on Gushel--Mukai fourfolds}
\label{subsect:preliminary_GM}

Let $V_5$ be a $5$-dimensional vector space over $\bC$ and let
$
    \CGr(2,V_5)
    \subset\bP\left(\bC\oplus\bigwedge^2 V_5\right)
    \cong\bP^{10}
$
be the cone over the Grassmannian $\Gr(2,V_5)$ under the Pl\"{u}cker embedding. A \emph{Gushel--Mukai fourfold} is a transverse intersection
$$
    X = \CGr(2,V_5)\cap Q \cap L\subset \bP^{10},
$$
where $Q$ is a quadric and $L\cong\bP^8$ is a linear subspace. The projection from the vertex of $\CGr(2,V_5)$ defines the \emph{Gushel map}
$p\colon X\longrightarrow\Gr(2,V_5)$.
The pullback $H\colonequals p^*\cO_{\Gr(2,V_5)}(1)$ of the Pl\"{u}cker polarization is ample and $-K_X = 2H$, which makes $X$ a Fano fourfold of degree $10$ and index $2$.

The cohomology group $H^4(X,\bZ)$ endowed with the intersection product is isometric to
$
    \langle1\rangle^{\oplus 22}
    \oplus
    \langle-1\rangle^{\oplus 2}
$
as abstract lattices. It contains the  sublattice
$\Lambda_{\operatorname{Gr}} = p^*H^4(\Gr(2,V_5),\bZ)$, isomorphic to $A_1^{\oplus 2}$, 
whose orthogonal complement in $H^4(X,\bZ)$ is called the \emph{vanishing lattice} of $X$. The vanishing lattice has signature $(20,2)$ and is isometric to
$$
    \Lambda_{\rm GM}\colonequals
    A_1^{\oplus 2} \oplus E_8^{\oplus 2}
    \oplus U^{\oplus 2}
$$
The associated $\Gamma_{\Lambda_{\rm GM}}$-quotient $\mathcal{D} = \mathcal{P}_{\Lambda_{\rm GM}}$ is a $20$-dimensional irreducible quasi-projective variety \cite{BB66}.

%-----Heegner divisors of special GM-fourfolds
\subsection{Heegner divisors of special GM-fourfolds}
\label{subsect:spGM_Heegner}

For every Gushel--Mukai fourfold $X$, the Hodge lattice $H^{2,2}(X,\bZ) %\colonequals H^{2,2}(X,\bC)\cap H^4(X,\bZ)
$
contains $\Lambda_{\Gr}$
%= p^*H^4(\Gr(2,V_5),\bZ)$ 
as a sublattice by construction. These two lattices coincide for a very general $X$. A Gushel--Mukai fourfold $X$ is called \emph{special} if $H^{2,2}(X,\bZ)$ contains a \emph{labelling}, i.e., a primitive rank-three sublattice $K$ that contains $\Lambda_{\Gr}$. By \cite{DIM15}*{Lemma~6.1}, a labelling $K$ has discriminant $d\equiv 0$, $2$, or $4\;(\bmod{\;8})$.

The monodromy group $\Gamma_{\Lambda_{\rm GM}}$ acts on the set of labellings of discriminant $d$. If $d\equiv 0$ or $4\;(\bmod{\;8})$, then \cite{DIM15}*{Proposition~6.2}, shows that there is only one orbit under this action and the representatives can be taken as $K_d=\langle h_1,h_2,\zeta \rangle$, where $h_1,h_2$ are generators of $\Lambda_{\rm Gr}$ and the intersection product is given by
\begin{equation}\label{eq:GMlattices1}
    K_d \cong\begin{pmatrix}
        2 & 0 & 0\\
        0 & 2 & 0\\
        0 & 0 & \frac{d}{4}
    \end{pmatrix}
    \text{ for }d\equiv 0\;(\bmod{\;8}),
    \quad
    K_d \cong\begin{pmatrix}
        2 & 0 & 1\\
        0 & 2 & 1\\
        1 & 1 & \frac{d+4}{4}
    \end{pmatrix}
    \text{ for }d\equiv 4\;(\bmod{\;8}).
\end{equation}
On the other hand, there are exactly two orbits when $d\equiv 2\;(\bmod{\;8})$. Examples of representatives for the two orbits are $K'_d= \langle h_1,h_2,\zeta' \rangle$ and $K''_d= \langle h_1,h_2,\zeta'' \rangle$ with intersection product given by
\begin{equation}\label{eq:GMlattices2}
    K_d'\cong\begin{pmatrix}
        2 & 0 & 1\\
        0 & 2 & 0\\
        1 & 0 & \frac{d+2}{4}
    \end{pmatrix}
    \quad\text{and}\quad
    K_d''\cong\begin{pmatrix}
        2 & 0 & 0\\
        0 & 2 & 1\\
        0 & 1 & \frac{d+2}{4}
    \end{pmatrix}.
\end{equation}
The period domains for special Gushel--Mukai fourfolds of discriminant $d$ are given by $\widetilde{\mathcal{D}}_{d}$,$\widetilde{\mathcal{D}}'_{d}$ and $\widetilde{\mathcal{D}}''_{d}$, where $\widetilde{\mathcal{D}}_{d}
\colonequals\left\{
w\in \Omega(\Lambda_{\rm GM}) \mid w\perp K_d
\right\}$
for $d\equiv 0,4\text{ (mod }8)$,
%\begin{align*}
%    \widetilde{\mathcal{D}}_{d}
%    &\colonequals\left\{
%        w\in \Omega(\Lambda_{\rm GM}) \mid w\perp K_d
%   \right\}
%    \quad\text{for}\quad d\equiv 0,4\text{ (mod }8).
    %\\[5pt]
    %\widetilde{\mathcal{D}}_{d}'
    %&\colonequals\left\{
    %   w\in \Omega(\Lambda_{\rm GM}) \mid w\perp K_d'
    %\right\}
    %\quad\text{for}\quad d\equiv 2\text{ (mod }8)
    %\\[5pt]
    %\widetilde{\mathcal{D}}_{d}''
    %&\colonequals\left\{
    %    w\in \Omega(\Lambda_{\rm GM}) \mid w\perp K_d''
    %\right\}
    %\quad\text{for}\quad d\equiv 2\text{ (mod }8).
%\end{align*}
and $\widetilde{\mathcal{D}}'_{d},\widetilde{\mathcal{D}}''_{d}$ are defined analogously in terms of $K_d',K_d''$ when $d\equiv 2\text{ (mod }8)$.
Their respective $\Gamma_{\Lambda_{\rm GM}}$-quotients form irreducible divisors $\mathcal{D}_d$, $\mathcal{D}_d'$, and $\mathcal{D}_d''$ in the period domain $\mathcal{D}$.

Let us fix some notations before discussing the relations between these divisors and the Heegner divisors associated with $\mathcal{D}$. Suppose that the component $A_1^{\oplus 2}$ of $\Lambda_{\rm GM}$ is spanned by the elements $e$ and $f$ where $\langle e,e \rangle = \langle f, f \rangle = 2$ and $\langle e, f \rangle = 0$. Then their dual elements $e_*$ and $f_*$ in $(A_1^{\oplus 2})^\vee$ satisfy $\langle e_*,e_* \rangle = \langle f_*,f_*\rangle = 1/2$ and $\langle e_*, f_* \rangle= 0$. We will fix the isomorphism between discriminant groups:
\begin{equation}
\label{eqn:disc_2(A2)}
    D\left(\Lambda_{\rm GM}\right)
    \cong D\left(A_1^{\oplus 2}\right)
    =\{0,e_*,f_*,e_*+f_*\}.
\end{equation}

\begin{lemma}
\label{lemma:HeegnerGM}
%The Heegner divisors $Y_{n,\gamma}\subset\mathcal{D}$ are nonempty if and only if
%$$
%   n = \frac{d}{8}
%    \quad\text{where}\quad
%    d\equiv 0,\; 2,\; 4\;(\bmod{\;8}).
%$$
The period domains $\mathcal{D}_d$, $\mathcal{D}_d'$, and $\mathcal{D}_d''$ are components of Heegner divisors in the following way:
$$\begin{array}{ccl}
    \mathcal{D}_d \subset Y_{\frac{d}{8}, 0},
    & \text{if} & d\equiv 0\;(\bmod{\;8})
    \\[10pt]
     \mathcal{D}_d'\subset Y_{\frac{d}{8}, e_*} \quad\text{and}\quad \mathcal{D}_d''\subset Y_{\frac{d}{8}, f_*}
    & \text{if} & d\equiv 2\;(\bmod{\;8})
    \\[10pt]
    \mathcal{D}_d\subset Y_{\frac{d}{8}, e_*+f_*} 
    & \text{if} & d\equiv 4\;(\bmod{\;8})
\end{array}$$
\end{lemma}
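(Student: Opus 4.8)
The plan is to prove each containment by exhibiting, for a labelling lattice $K$ of the appropriate type, a vector $v\in\Lambda_{\rm GM}^\vee$ whose orthogonal hyperplane $v^\perp$ cuts out the period domain $\widetilde{\mathcal{D}}_d$ (or $\widetilde{\mathcal{D}}'_d,\widetilde{\mathcal{D}}''_d$) and which lands in the correct discriminant class $\gamma$ with the correct value $n=\frac{1}{2}\langle v,v\rangle$. Recall that $\widetilde{\mathcal{D}}_d=\{w\in\Omega(\Lambda_{\rm GM})\mid w\perp K_d\}$, and since $w\in\Omega(\Lambda_{\rm GM})$ already satisfies $w\perp\Lambda_{\rm Gr}=\langle h_1,h_2\rangle=\langle e,f\rangle$ by construction, the only extra condition is $w\perp\zeta$. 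So the hyperplane is governed by a single vector, namely the projection $\zeta^\perp$ of $\zeta$ onto the orthogonal complement of $\Lambda_{\rm Gr}$; this projection is the $v$ we want, and $v^\perp\cap\Omega(\Lambda_{\rm GM})$ equals $\widetilde{\mathcal{D}}_d$.

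First I would carry out the lattice-theoretic computation of this projection in each case. Writing $h_1=e$, $h_2=f$ with $\langle e,e\rangle=\langle f,f\rangle=2$, one decomposes $\zeta=\alpha e+\beta f+v$ with $v\in\Lambda_{\rm Gr}^\perp$, and solves $\langle\zeta,e\rangle=2\alpha$, $\langle\zeta,f\rangle=2\beta$ using the Gram matrices in \eqref{eq:GMlattices1} and \eqref{eq:GMlattices2}. For instance in the $d\equiv0\ (\bmod\ 8)$ case $\langle\zeta,e\rangle=\langle\zeta,f\rangle=0$ gives $\alpha=\beta=0$, so $v=\zeta$, $\tfrac12\langle v,v\rangle=\tfrac{d}{8}$, and $v\in\Lambda_{\rm GM}$ means its class is $\gamma=0$; this matches $Y_{\frac{d}{8},0}$. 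In the $d\equiv4$ case $\langle\zeta,e\rangle=\langle\zeta,f\rangle=1$ forces $\alpha=\beta=\tfrac12$, so $v=\zeta-\tfrac12 e-\tfrac12 f$ and its image in $D(\Lambda_{\rm GM})\cong D(A_1^{\oplus2})$ is $-\tfrac12 e-\tfrac12 f\equiv e_*+f_*$ (using $e_*=\tfrac12 e$, $f_*=\tfrac12 f$); a short computation gives $\tfrac12\langle v,v\rangle\equiv\tfrac{d}{8}\ (\bmod\ \bZ)$, matching $Y_{\frac{d}{8},e_*+f_*}$. The two $d\equiv2$ cases are entirely parallel: $K'_d$ gives $\alpha=\tfrac12,\beta=0$, hence class $e_*$, while $K''_d$ gives class $f_*$, and in both $\tfrac12\langle v,v\rangle\equiv\tfrac{d}{8}$.

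After pinning down $v$, I would note that the projection $v$ is primitive in the relevant coset and that the full $\Gamma_{\Lambda_{\rm GM}}$-orbit sum of such $v^\perp$ defines the Heegner divisor $Y_{n,\gamma}$, so that $\widetilde{\mathcal{D}}_d$ (one orbit of hyperplanes) descends to a single irreducible component $\mathcal{D}_d$ of $Y_{n,\gamma}$; this is exactly the containment claimed, with the inclusion rather than equality reflecting that $Y_{n,\gamma}$ may collect contributions from several $\Gamma_{\Lambda_{\rm GM}}$-orbits of vectors of norm $2n$ in the class $\gamma$. The irreducibility of $\mathcal{D}_d$, $\mathcal{D}'_d$, $\mathcal{D}''_d$ is already recorded in the text preceding the lemma, so I would simply invoke it.

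The main obstacle is purely bookkeeping: verifying that the congruence $\tfrac12\langle v,v\rangle\equiv n\ (\bmod\ \bZ)$ comes out to $\tfrac{d}{8}$ in every case, and that the residue class of $v$ in $D(\Lambda_{\rm GM})$ is computed consistently with the chosen identification \eqref{eqn:disc_2(A2)} and the convention $e_*=\tfrac12 e$, $f_*=\tfrac12 f$. One must be careful with signs (since $D(A_1)\cong\bZ/2\bZ$, the sign ambiguity is harmless here, which is a convenient simplification) and with the fact that $\langle\zeta,\zeta\rangle$ is read off the corner entry of the Gram matrix ($\tfrac{d}{4}$, $\tfrac{d+4}{4}$, or $\tfrac{d+2}{4}$) before subtracting the contribution of the $\Lambda_{\rm Gr}$-component. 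Once these elementary verifications are assembled the lemma follows.
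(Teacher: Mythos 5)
Your overall route coincides with the paper's: you produce exactly the vectors the paper uses, namely $v=\zeta$ for $d\equiv 0$, $v=\zeta-\tfrac12h_1-\tfrac12h_2$ for $d\equiv 4$, and $v'=\zeta'-\tfrac12h_1$, $v''=\zeta''-\tfrac12h_2$ for $d\equiv 2\;(\bmod\;8)$ (the paper writes them down directly; your orthogonal-projection derivation explains where they come from), you verify $v\in\Lambda_{\rm GM}^\vee$ and compute the norm, and you conclude that $\mathcal{D}_d$ is a component by the dimension count. One small phrasing issue: the Heegner index demands the exact equality $\tfrac12\langle v,v\rangle=\tfrac{d}{8}$, not just a congruence modulo $\bZ$ as you state in the $d\equiv 4$ and $d\equiv 2$ cases; your Gram-matrix computations do in fact yield the equality, so this is only a matter of saying it correctly.

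The genuine flaw is in how you identify the residue class $\gamma$. You set $h_1=e$, $h_2=f$ and read off, e.g., $v\equiv -\tfrac12e-\tfrac12f\equiv e_*+f_*$ ``using $e_*=\tfrac12e$, $f_*=\tfrac12f$.'' But in the identification \eqref{eqn:disc_2(A2)}, $e$ and $f$ generate the $A_1^{\oplus2}$ summand \emph{inside the vanishing lattice} $\Lambda_{\rm GM}$, whereas $h_1,h_2$ span $\Lambda_{\rm Gr}$, which is orthogonal to $\Lambda_{\rm GM}$ in $H^4(X,\bZ)$; in particular $h_1\neq e$, the element $-\tfrac12h_1-\tfrac12h_2$ does not lie in $\Lambda_{\rm GM}\otimes\bQ$, and so it cannot represent a class in $D(\Lambda_{\rm GM})=\Lambda_{\rm GM}^\vee/\Lambda_{\rm GM}$. (The same conflation appears in your opening claim that $w\perp\langle e,f\rangle$ for $w\in\Omega(\Lambda_{\rm GM})$, which is false for the $e,f$ of \eqref{eqn:disc_2(A2)}.) What you are implicitly invoking is the glue isomorphism $D(\Lambda_{\rm Gr})\cong D(\Lambda_{\rm GM})$ coming from the unimodular overlattice $H^4(X,\bZ)$, but that is never set up, and it need not match \eqref{eqn:disc_2(A2)} in the naive way your notation suggests. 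The paper sidesteps this entirely: since $D(\Lambda_{\rm GM})$ has only four elements, the class of $v$ is determined, up to the swap $e_*\leftrightarrow f_*$, by $4\langle v,v\rangle\;(\bmod\;8)$ --- this is the congruence table at the start of the paper's proof --- so the computed norm $\tfrac12\langle v,v\rangle=\tfrac{d}{8}$ with $d\equiv 0,2,4\;(\bmod\;8)$ already forces $\gamma=0$, $\gamma\in\{e_*,f_*\}$, or $\gamma=e_*+f_*$, respectively; in the case $d\equiv 2$ one checks in addition that $v'\not\equiv v''\;(\bmod\;\Lambda_{\rm GM})$ (which you also need but only get through the invalid identification) and absorbs the remaining swap into the choice of \eqref{eqn:disc_2(A2)}. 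With that congruence argument substituted for your direct class computation, your proof becomes correct and is essentially the paper's.
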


\begin{proof}
For each $v\in\Lambda_{\rm GM}^\vee$, one can verify that
$$
    8n
    = 8\cdot\frac{1}{2}\langle v,v\rangle
    = 4\langle v,v\rangle
    \equiv \begin{cases}
    0\;(\bmod{\;8}) &\text{if } v\equiv 0\;(\bmod{\;\Lambda_{\rm GM}}),
    \\[5pt]
    2\;(\bmod{\;8}) &\text{if } v\equiv e_*\text{ or }f_*\;(\bmod{\;\Lambda_{\rm GM}}),
    \\[5pt]
    4\;(\bmod{\;8}) &\text{if } v\equiv e_*+f_*\;(\bmod{\;\Lambda_{\rm GM}}).
    \end{cases}
$$
%This shows that $Y_{n,\gamma}$ is nonzero only if $n = d/8$ where $d\equiv 0$, $2$, or $4\;(\bmod{\;8})$. To prove the converse and the remaining part of the lemma, it is sufficient to show that $\mathcal{D}_d$, $\mathcal{D}_d'$, and $\mathcal{D}_d''$ are Heegner divisors as described in the statement.

Let us first assume that $d\equiv 0\;(\bmod{\;8})$. We would like to prove that $\mathcal{D}_d$ is a component of $Y_{\frac{d}{8},0}$. For the inclusion $\mathcal{D}_d \subset Y_{\frac{d}{8},0}$, if $K_d=\langle h_1,h_2,\zeta \rangle$ is a labelling as in \eqref{eq:GMlattices1}, we need to prove that $K_d^{\perp}\cap \Omega(\Lambda_{\rm GM})$ is of the form $v^{\perp}$ for one $v\in \Lambda_{\rm GM}^{\vee}$ such that $\frac{1}{2}\langle  v,v\rangle= \frac{d}{8}$ and $v\equiv 0\;(\bmod{\;\Lambda_{\rm GM}})$. This can be done  simply by taking $v=\zeta$: indeed we see that $\langle v,h_1 \rangle = \langle v,h_2 \rangle = 0$ so that $v\in \Lambda_{\rm GM}^{\perp}$, and moreover $\frac{1}{2}\langle v,v \rangle = \frac{d}{8}$ so that $v\equiv 0\; (\bmod{\; \Lambda_{{\rm{GM}}}})$. This shows that $\mathcal{D}_d\subset Y_{\frac{d}{8},0}$ and for dimension reasons it must be a component.

The proof in the case $d\equiv 4\; (\bmod{\; 8})$ is similar: given the lattice $K_d=\langle h_1,h_2,\zeta \rangle$ as in \eqref{eq:GMlattices1}, we can take the vector $v=\zeta-\frac{1}{2}h_1-\frac{1}{2}h_2$. 

Now assume that $d\equiv 2\;(\bmod{\; 8})$. If we take the two lattices $K'_d=\langle h_1,h_2,\zeta' \rangle$ and $K''_d=\langle h_1,h_2,\zeta'' \rangle$ as in \eqref{eq:GMlattices2}, we can define the two vectors $v' = \zeta'-\frac{1}{2}h_1$ and $v''=\zeta'-\frac{1}{2}h_2$. We see as above that $v',v''\in \Lambda^{\vee}_{\rm GM}$ and $\frac{1}{2}\langle v',v' \rangle = \frac{1}{2}\langle v'',v'' \rangle = \frac{d}{8}$. Moreover, $v',v''$ are not equivalent modulo $\Lambda_{\rm GM}$ so, up to exchanging $e_*,f_*$ we can assume that $v'\equiv e_*$ and $v''\equiv f_*$ modulo $\Lambda_{\rm GM}$. This shows that $\mathcal{D}'_d \subset Y_{\frac{d}{8},e_*}$ and $\mathcal{D}''_d \subset Y_{\frac{d}{8},f_*}$. 

\end{proof}

\begin{cor}\label{cor:irratdd}
	The degrees of irrationality of $\mathcal{D}_d,\mathcal{D}'_d,\mathcal{D}''_d$ grow at most like a polynomial of degree 10 in $d$:
	\[
	\irr(\mathcal{D}_d), \; \irr(\mathcal{D}'_d),\; \irr(\mathcal{D}''_d) = O(d^{10}).
	\]
\end{cor}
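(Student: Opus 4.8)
The plan is to mimic the proof of Corollary~\ref{cor:irratcd} for the cubic fourfold divisors, combining the general bound of Theorem~\ref{cor:heegnerirrationality} with the realization of $\mathcal{D}_d$, $\mathcal{D}'_d$, $\mathcal{D}''_d$ as components of Heegner divisors furnished by Lemma~\ref{lemma:HeegnerGM}.

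First I would record the numerical input. The vanishing lattice $\Lambda_{\rm GM}$ has signature $(20,2)$, so in the notation of Theorem~\ref{cor:heegnerirrationality} we have $m=20\geq 4$ and the theorem applies, giving growth order $m/2 = 10$ for the degree of irrationality of any irreducible component of a Heegner divisor on $\mathcal{P}_{\Lambda_{\rm GM}}$. Equivalently, by Theorem~\ref{Thm.Bor99} the generating series of these divisors is a vector-valued modular form of weight $1+\frac{m}{2}=11$, so by Corollary~\ref{cor:growthvector} its Fourier coefficients---hence the degrees $\deg(\overline{Y}_{n,\gamma})$---grow like $O(n^{10})$.

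Next I would invoke Lemma~\ref{lemma:HeegnerGM}, which exhibits each of $\mathcal{D}_d$, $\mathcal{D}'_d$, $\mathcal{D}''_d$ as an irreducible component of a Heegner divisor $Y_{n,\gamma}$ with $n=\tfrac{d}{8}$ and $\gamma\in D(\Lambda_{\rm GM})$ determined by the class of $d$ modulo $8$. Applying Theorem~\ref{cor:heegnerirrationality} to each such component yields
$$
    \irr(\mathcal{D}_d),\ \irr(\mathcal{D}'_d),\ \irr(\mathcal{D}''_d)
    = O\!\left(\left(\tfrac{d}{8}\right)^{10}\right)
    = O(d^{10}),
$$
which is exactly the asserted bound.

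I do not anticipate any genuine obstacle: the statement is a formal consequence of the two cited results, entirely parallel to Corollary~\ref{cor:irratcd}. The only point deserving a moment's care is that the exponent $10$ comes out correctly, i.e. that $m=20$ produces weight $11$ and hence coefficient growth of order $m/2=10$, and that the factor $\tfrac{1}{8}$ coming from $n=\tfrac{d}{8}$ is absorbed into the implied constant; both are immediate.
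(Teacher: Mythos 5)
Your proposal is correct and follows exactly the paper's argument: the paper, too, deduces the bound immediately from Theorem~\ref{cor:heegnerirrationality} applied to the Heegner-divisor components identified in Lemma~\ref{lemma:HeegnerGM}, noting that $\mathcal{D}_d$, $\mathcal{D}'_d$, $\mathcal{D}''_d$ are irreducible. Your explicit bookkeeping ($m=20$ giving weight $1+\frac{m}{2}=11$ and coefficient growth $O(n^{10})$, with $n=\frac{d}{8}$ absorbed into the constant) is just the unpacking of what the paper leaves implicit.
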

\begin{proof}
This follows immediately from Theorem~\ref{cor:heegnerirrationality} and the identification of Lemma \eqref{lemma:HeegnerGM}, once we recall that these period domains are all irreducible.
\end{proof}

\begin{rmk}
Consider the series for the degrees of Heegner divisors associated to special Gushel--Mukai fourfolds
\begin{equation}
\label{eqn:rmkGM}\Theta(q)=\sum_{n,\gamma}\deg(\overline{Y}_{n,\gamma})q^n.
\end{equation} 
By looking at the eigenvectors of the Weil representation evaluated at the generators of the modular group $\Gamma$, one can place 
\eqref{eqn:rmkGM} in a much smaller vector space of modular forms. More concretely, \eqref{eqn:rmkGM} is a scalar-valued modular form for the group 
$$
    \Gamma^0(4)
    \colonequals\left\{
        \begin{pmatrix}
            a & b \\
            c & d
        \end{pmatrix}\in\SL_2(\mathbb{Z})
        \,\bigg|\,
        b \equiv 0\;(\bmod{\;4})
    \right\}
$$
and character $\chi_{-4}$ determined by
$$
    \chi_{-4}\left(\begin{array}{cc}a&b\\c&d\end{array}\right)=\chi_{-4}(d),
    \quad\text{ for all } \left(\begin{array}{cc}a&b\\c&d\end{array}\right)\in \Gamma^0(4),
$$
where $\chi_{-4}$ is the unique nontrivial Dirichlet character modulo $4$.

\end{rmk}

\subsection{K3 surfaces associated with special GM-fourfolds}
\label{subsect:assoK3_spGM}

A picture analogous to the case of special cubic fourfolds holds for special Gushel--Mukai fourfolds \cite{DIM15}*{Section~6.2}. Let $X$ be a special Gushel--Mukai fourfold marked by a labelling $K\subset H^4(X,\bZ)$ of discriminant $d=2g-2$. Then a polarized K3 surface $(S,H)$ of genus $d$ is \emph{associated with} $(X,K)$ if there is an isometry
$$\xymatrix{
    K^{\perp H^4(X,\bZ)}\ar[r]^-\sim &
    H^2(S,\bZ)_{\rm prim}(-1)
}$$
preserving the Hodge structures. There exists such a K3 surface if and only if the discriminant $d$ is congruent to either $2$ or $4$ modulo $8$ and it is not divisible by any prime $p\equiv 3\;(\bmod\;4)$. Moreover, the associated K3 surface is generically unique and this induces birational maps
\begin{equation}
\label{eqn:Fg-Dd}
\begin{gathered}
\xymatrix{
    \mathcal{F}_g \ar@{-->}[r]^-\sim &
    \mathcal{D}_{d}
}
\quad\text{for}\quad
d\equiv 4 \;(\bmod\;8), \\
\xymatrix{
    \mathcal{F}_g \ar@{-->}[r]^-\sim &
    \mathcal{D}_d'
} 
\quad\text{and}\quad
\xymatrix{
    \mathcal{F}_g \ar@{-->}[r]^-\sim &
    \mathcal{D}_d''
} 
\quad\text{for}\quad
d\equiv 2 \;(\bmod\;8),
\end{gathered}
\end{equation}
see \cite{BP20}*{Theorem~1.2 and Section~4.1}.
Now we are ready to prove Case~\ref{case:Gushel-Mukai} of Theorem~\ref{thm:specialK3}.

\begin{cor}
\label{cor:case_spGM}
Let $d = 2g-2 > 6$ and suppose that $d\equiv 2$ or $4\;(\bmod\;8)$, and is not divisible by any prime $p\equiv 3\;(\bmod\;4)$. Then there exists a constant $C>0$ such that 
$$
    \irr\left(\cF_g\right)
    \leq C\cdot g^{10}.
$$
\end{cor}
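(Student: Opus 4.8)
The plan is to run the same argument as in the proof of Corollary~\ref{cor:case_spCubic}, replacing special cubic fourfolds by special Gushel--Mukai fourfolds; the Gushel--Mukai case is in fact slightly cleaner, since the relevant period maps are birational rather than merely generically finite of degree one or two.

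First I would verify that the numerical hypotheses place us exactly in the regime where an associated K3 surface exists. The assumption that $d = 2g-2 > 6$ satisfies $d \equiv 2$ or $4 \pmod 8$ and is not divisible by any prime $p \equiv 3 \pmod 4$ is precisely the criterion recalled in Section~\ref{subsect:assoK3_spGM}. Under this criterion a general special Gushel--Mukai fourfold of discriminant $d$ carries a generically unique associated K3 surface, and this is what yields the birational maps recorded in \eqref{eqn:Fg-Dd}.

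Next I would invoke that the degree of irrationality is a birational invariant. When $d \equiv 4 \pmod 8$, the map $\mathcal{F}_g \dashrightarrow \mathcal{D}_d$ in \eqref{eqn:Fg-Dd} is birational, whence
$$
    \irr(\mathcal{F}_g) = \irr(\mathcal{D}_d).
$$
When $d \equiv 2 \pmod 8$, the map $\mathcal{F}_g \dashrightarrow \mathcal{D}'_d$ (and equally $\mathcal{F}_g \dashrightarrow \mathcal{D}''_d$) is birational, so $\irr(\mathcal{F}_g) = \irr(\mathcal{D}'_d)$. In both cases Corollary~\ref{cor:irratdd} bounds the right-hand side by $O(d^{10})$; since $d = 2g-2$ this is $O(g^{10})$, and absorbing the implied constant produces a $C > 0$ with $\irr(\mathcal{F}_g) \le C \cdot g^{10}$.

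The genuine content lies entirely upstream: Lemma~\ref{lemma:HeegnerGM} identifying $\mathcal{D}_d$, $\mathcal{D}'_d$, $\mathcal{D}''_d$ as components of Heegner divisors on $\mathcal{P}_{\Lambda_{\rm GM}}$, the Borcherds modularity feeding the $O(d^{10})$ estimate of Corollary~\ref{cor:irratdd}, and the input from \cite{BP20} that the period maps are birational. Given these, the corollary is a short assembly step, so I do not expect a real obstacle; the only point to watch is that one must use honest birationality --- so that $\irr$ is preserved exactly rather than merely bounded by the degree of a map --- which is also why, in contrast with the cubic case, no factor of two appears.
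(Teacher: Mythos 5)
Your proposal is correct and is essentially identical to the paper's own proof: it invokes the birational maps of \eqref{eqn:Fg-Dd} to get $\irr(\mathcal{F}_g) = \irr(\mathcal{D}_d)$ (resp.\ $\irr(\mathcal{D}'_d) = \irr(\mathcal{D}''_d)$) and concludes by Corollary~\ref{cor:irratdd}. Your side remarks --- that the numerical hypotheses are exactly the associated-K3 criterion from Section~\ref{subsect:assoK3_spGM}, and that honest birationality removes the factor of two present in the cubic fourfold case --- are accurate and match the paper's reasoning.
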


\begin{proof}
By \eqref{eqn:Fg-Dd}, we have either $\irr(\mathcal{F}_g) = \irr(\mathcal{D}_d)$
if $d\equiv 4\;(\bmod\;8)$, or $\irr(\mathcal{F}_g) = \irr(\mathcal{D}_d') = \irr(\mathcal{D}_d'')$ if $d\equiv 2\;(\bmod\;8)$.
%\begin{gather*}
%    \irr(\mathcal{F}_g) = \irr(\mathcal{D}_d)
%    \quad\text{if}\quad
%    d\equiv 4\;(\bmod\;8), \\
%    \irr(\mathcal{F}_g) = \irr(\mathcal{D}_d') = \irr(\mathcal{D}_d'')
%    \quad\text{if}\quad
%    d\equiv 2\;(\bmod\;8).
%\end{gather*}
Then the conclusion follows from Corollary~\ref{cor:irratdd}.
\end{proof}

%----------Special hyperk\"{a}hler fourfolds
\section{Special hyperk\"{a}hler fourfolds}
\label{sect:periodHKs}

In this section, we start with a brief review on hyperk\"{a}hler fourfolds following the exposition in \cite{DM19}*{Section~3}. We will show that the loci of special hyperk\"{a}hler fourfolds in the period space are components of Heegner divisors, and then derive Theorem~\ref{thm:specialK3}~\ref{case:specialHK} from this.

%-----Preliminaries on hyperk\"{a}hler fourfolds
\subsection{Preliminaries on hyperk\"{a}hler fourfolds}
\label{subsect:preliminary_HK}

Let $(X,H)$ be a smooth polarized hyperk\"{a}hler fourfold of K3-type. The cohomology group $H^2\left(X, \bZ\right)$ is equipped with a $\bZ$-valued bilinear form $q_X$ known as the Beauville--Bogomolov--Fujiki form \cite{Bea83}, which we denote by $\langle , \rangle$ in what follows. The lattice $\left(H^2\left(X,\bZ\right), \langle, \rangle \right)(-1)$ is even of signature $(20,3)$ isometric to 
$$
\Lambda_{\rm HK}
= A_1\oplus E_8^{\oplus 2}\oplus U^{\oplus 3}.
$$
Note that the discriminant group $D(\Lambda_{\rm HK})$ is isomorphic to $\bZ/2\bZ$. Let $h\in H^2(X,\bZ)$ denote the first Chern class of the polarization $H$. We assume further that $h$ is primitive. Then the ideal $\langle h, \Lambda_{\rm HK} \rangle \subset \bZ$ is generated by either $1$ or $2$, which is called the \textit{divisibility} of $h$ and denoted as ${\rm{div}}(h)$.

For a positive integer $n$ and a number $\delta\in\{1,2\}$, the polarized hyperk\"{a}hler fourfolds $(X,H)$ of K3-type with $\langle h,h \rangle =2n$ and ${\rm{div}}(h)=\delta$
form an irreducible quasi-projective moduli space $\mathcal{M}_{2n}^\delta$. This space is nonempty provided that $\delta=1$ and $n$  arbitrary, or $\delta=2$ and $n\equiv 3 \;(\bmod\;4)$ and in both cases it has dimension $20$ \cite{GHS13}.
We denote by $H^2(X,\mathbb{Z})_{\rm prim} := h^{\perp}$ the primitive cohomology lattice. In the case that $\delta=1$, we have 
$$
H^2\left(X,\bZ\right)_{\rm prim}(-1)
\cong\Lambda_{2n}^1
= E_8^{\oplus 2}\oplus U^{\oplus 2}\oplus
\begin{pmatrix}
2 & 0\\
0 & 2n
\end{pmatrix}.
$$
When $\delta=2$ and $n\equiv 3\;(\bmod\;4)$, we have
$$
H^2\left(X,\bZ\right)_{\rm prim}(-1)
\cong\Lambda_{2n}^2
= E_8^{\oplus 2}\oplus U^{\oplus 2}\oplus
\begin{pmatrix}
2 & 1\\
1 & \frac{n+1}{2}
\end{pmatrix},
$$
see \cite{GHS13}*{Example~7.7} or \cite{DM19}*{Section~3.2}. These are even lattices of signature $(20,2)$ with discriminant groups:
\begin{equation}
	\label{eqn:disc_primHK}
	D(\Lambda_{\rm 2n}^1) = \bZ/2\bZ\times\bZ/2n\bZ
	\quad\text{and}\quad
	D(\Lambda_{\rm 2n}^2) = \bZ/n\bZ.
\end{equation}
In any case, there is a period map
$
%	\label{eqn:periodMap_HK}
	\cM_{2n}^\delta\longrightarrow
	\mathcal{P}_{\Lambda_{2n}^\delta}
$
which is an open embedding \cite{Ver13}, see also \cite{GHS13}*{Theorem~2.3 and Remark~2.5}.

%-----Special HK-fourfolds and Heegner divisors
\subsection{Special HK-fourfolds and Heegner divisors}
\label{subsect:spHK_Heegner}

A polarized hyperk\"{a}hler fourfold is called {\textit{special of discriminant}} $d$ if $H^{1,1}\left(X,\bZ\right)$ contains a labelling $K$, i.e., a primitive rank $2$ lattice $K$ containing $h$, with ${\rm{disc}}\left(K^{\perp}\right)= -d$.
For every positive integer $d$, we consider the union of hyperplane sections
\begin{equation}
	\label{eqn:unionCnd's}
	\widetilde{\cC}_{n,d}^\delta\colonequals
	\bigcup_{\substack{
			K:\text{ labelling with}\\
			{\rm{disc}}\left(K^\perp\right)=-d
	}}
	K^\perp\cap\Omega(\Lambda_{2n}^\delta)
\end{equation}
and let $\cC_{n,d}^\delta\subset\mathcal{P}_{\Lambda_{2n}^\delta}$ be the corresponding $\Gamma_{\Lambda_{2n}^\delta}$-quotient. This is a divisor in $\mathcal{P}_{\Lambda_{2n}^\delta}$.

\begin{prop}
	\label{prop:HeegnerHK}
	The divisor $\cC_{n,d}^\delta$ is contained in a finite union of Heegner divisors
	$$
	\bigcup_{\gamma\in D(\Lambda_{2n}^\delta)}Y_{\frac{d}{2N}, \gamma}\subset \mathcal{P}_{\Lambda_{2n}^\delta}
	$$
	where $N=4n$ if $\delta=1$ and $N=n$ if $\delta=2$.
\end{prop}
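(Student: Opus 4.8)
The plan is to show that each hyperplane $K^\perp\cap\Omega(\Lambda_{2n}^\delta)$ appearing in $\widetilde{\cC}_{n,d}^\delta$ coincides with $v^\perp$ for a suitable $v\in(\Lambda_{2n}^\delta)^\vee$ of the correct square, and then to read off its residue class $\gamma\in D(\Lambda_{2n}^\delta)$. Write $L=\Lambda_{\rm HK}$ and $M=\Lambda_{2n}^\delta=h^{\perp_L}$, so that $M\otimes\bC$ carries the period domain. A labelling has rank two and contains $h$, hence $K=\langle h,\kappa\rangle$, and its orthogonal complement satisfies $K^{\perp_L}=M\cap\kappa^\perp$, a corank-one sublattice of $M$. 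First I would let $v_0\in M$ be the primitive vector spanning $(K^{\perp_L})^{\perp_M}$; since a period $w\in\Omega(M)$ is automatically orthogonal to $h$, the condition $w\perp K$ is equivalent to $w\perp v_0$, so that $K^\perp\cap\Omega(M)=v_0^\perp$.

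Next I would replace $v_0$ by the dual-primitive vector $v\colonequals v_0/\operatorname{div}_M(v_0)\in M^\vee$, which lies on the same line and hence defines the same hyperplane $v^\perp=v_0^\perp=K^\perp\cap\Omega(M)$. The heart of the matter is the square of $v$. Using the standard identity for a primitive vector in a nondegenerate even lattice,
\[
    \left|\operatorname{disc}(v_0^{\perp_M})\right|
    =\frac{|\operatorname{disc}(M)|\cdot|\langle v_0,v_0\rangle|}{\operatorname{div}_M(v_0)^2},
\]
together with $|\operatorname{disc}(v_0^{\perp_M})|=|\operatorname{disc}(K^\perp)|=d$ and $|\operatorname{disc}(M)|=N$ (indeed $D(\Lambda_{2n}^1)=\bZ/2\bZ\times\bZ/2n\bZ$ has order $4n$ and $D(\Lambda_{2n}^2)=\bZ/n\bZ$ has order $n$, matching $N=4n$ and $N=n$ respectively), I obtain
\[
    \langle v,v\rangle
    =\frac{\langle v_0,v_0\rangle}{\operatorname{div}_M(v_0)^2}
    =\frac{d}{N},
    \qquad\text{hence}\qquad
    \tfrac{1}{2}\langle v,v\rangle=\frac{d}{2N}.
\]
Here the positivity $\langle v_0,v_0\rangle>0$ follows from the Hodge index theorem, since $v_0$ is an algebraic class and the two negative directions of $M$ are spanned by the period.

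Finally, setting $\gamma\colonequals v\bmod M\in D(M)$ gives $K^\perp\cap\Omega(M)=v^\perp\subset\widetilde{Y}_{\frac{d}{2N},\gamma}$ directly from the definition of the Heegner divisor. Taking the union over all labellings $K$ with $\operatorname{disc}(K^\perp)=-d$ and descending to the quotient $\mathcal{P}_{\Lambda_{2n}^\delta}$ then yields $\cC_{n,d}^\delta\subset\bigcup_{\gamma\in D(\Lambda_{2n}^\delta)}Y_{\frac{d}{2N},\gamma}$, as claimed. Because we take the union over all $\gamma$, no further control over which class actually occurs is needed.

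The step I expect to be the main obstacle is the lattice-discriminant bookkeeping: verifying the displayed identity for $\operatorname{disc}(v_0^{\perp_M})$ (equivalently, pinning down the index $[M:\langle v_0\rangle\oplus v_0^{\perp_M}]=|\langle v_0,v_0\rangle|/\operatorname{div}_M(v_0)$), keeping track of the effect of the $(-1)$-twist on the sign of the discriminant, and confirming that $|\operatorname{disc}(\Lambda_{2n}^\delta)|$ equals exactly the quantity $N$ appearing in the statement. Everything else is formal once the hyperplane $K^\perp\cap\Omega(M)$ has been recognised as $v^\perp$ for the dual-primitive vector $v$.
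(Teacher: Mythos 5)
Your proposal is correct and follows essentially the same route as the paper: the paper also takes the generator $\kappa$ of $K\cap h^{\perp}$, passes to $v=\kappa/\operatorname{div}_{\Lambda_{2n}^\delta}(\kappa)\in(\Lambda_{2n}^\delta)^\vee$, and identifies $\frac{1}{2}\langle v,v\rangle=\frac{d}{2\cdot\operatorname{disc}(\Lambda_{2n}^\delta)}$ with $\operatorname{disc}(\Lambda_{2n}^\delta)=N$, the only difference being that the paper cites \cite{GHS13}*{Lemma~7.2} and \cite{DM19}*{Proposition~4.1} for the identity $d=\langle v,v\rangle\cdot\operatorname{disc}(\Lambda_{2n}^\delta)$, which you instead verify directly via the standard index formula $[M:\langle v_0\rangle\oplus v_0^{\perp_M}]=|\langle v_0,v_0\rangle|/\operatorname{div}_M(v_0)$ --- a correct and self-contained substitute. (One harmless imprecision: $K$ may contain $\langle h,\kappa\rangle$ only with finite index, but since $K^{\perp}$ depends only on the rational span of $h$ and $\kappa$, your identification $K^{\perp_L}=\kappa^{\perp_M}$ stands.)
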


\begin{proof}
	Let $K\subset \Lambda_{\rm HK}$ be a labelling with ${\rm{disc}}\left(K^\perp\right)=-d$. Then the intersection $K\cap h^{\perp}$ is a rank one lattice with generator $\kappa\in \Lambda_{2n}^{\delta}$. Consider the vector
	$$
	v\colonequals
	\frac{1}{{\rm div}_{\Lambda_{2n}^\delta}(\kappa)}\cdot\kappa
	\in(\Lambda_{2n}^\delta)^\vee.
	$$ 
	According to \cite{GHS13}*{Lemma~7.2} and also to the proof of \cite{DM19}*{Proposition~4.1}, one has $d =\langle v, v\rangle\cdot{\rm{disc}}\left(\Lambda_{2n}^\delta\right)$.
	Thus, the union of hyperplanes \eqref{eqn:unionCnd's} is contained in
	$$
	\bigcup_{
		v\in \left(\Lambda_{2n}^\delta\right)^\vee,\;
		\frac{1}{2}\langle v, v\rangle=n
	}
	v^\perp,
	\quad\text{where}\quad
	n = \frac{d}{2\cdot{\rm{disc}}(\Lambda_{2n}^\delta)}.
	$$
	Recall from \eqref{eqn:disc_primHK} that ${\rm{disc}}\left(\Lambda_{2n}^\delta\right)$ is $4n$ if $\delta=1$ and $n$ if $\delta=2$. Taking the $\Gamma_{\Lambda_{2n}^\delta}$-quotient concludes the proof.
\end{proof}

%\begin{thm}
%	\label{thm:HeegnerSpecialHK}
%	For every $n>0$, there exists a constant $C_n>0$ such that
%	$$
%	\irr\left(\cC_{n,d}^\delta\right)
%	\leq C_n\cdot d^{10}.
%	$$
%\end{thm}

%\begin{proof}
%	Consider the generating series
%	$$
%	\Theta_{2n}^\delta = \sum_{m,\gamma} \deg\left({y_{m,\gamma}}\right)q^mv_\gamma=\sum_{\gamma}\Theta_{2n,\gamma}^\delta(q)v_\gamma.
%	$$
%	Recall that the Weil representation 
%	$$
%	\rho_{\Lambda_{2n}^\delta}:{\rm{Mp}}_2\left(\bZ\right)\longrightarrow {\rm{GL}}\left(\bC\left[D\left(\Lambda_{2n}^\delta\right)\right]\right)
%	$$
%	factors through $\SL_2\left(\bZ/N\bZ\right)$, where $N$ is the order of the discriminant group:
%	$$
%	N=\begin{cases}
%	4n &\text{if } \delta=1,
%	\\[5pt]
%	n &\text{if } \delta=2.
%	\end{cases}
%	$$
%	In particular, for fixed $\gamma$, $\delta$, and $n$, the series $\Theta_{2n,\gamma}^\delta(q)$ is a scalar valued modular form of weight $11$ with trivial character for the group
%	$$
%	\Gamma(N) = \ker\left(\SL_2(\bZ)
%	\longrightarrow
%	\SL_2(\bZ/N\bZ)\right).
%	$$
%	Then the statement follows from Proposition~\ref{prop:growthscalar}.
%\end{proof}

%-----Associated moduli spaces of K3 surfaces
\subsection{Associated moduli spaces of K3 surfaces}
\label{subsect:est_assoK3}

When $\delta = 1$, the moduli spaces $\cF_g$, $\cM_{2n}^1$, and the divisor $\cC_{n,d}^1$ are related in the following way: for a polarized K3 surface $(S,H)$, its Hilbert scheme $S^{[2]}$ of length two subschemes carries a divisor class $H_2 \in H^2(S^{[2]},\mathbb{Z})$ that represents the locus of all schemes which intersect one divisor in $|H|$. Moreover, there is also the class $\Delta$ that represents the locus of non-reduced schemes. This class is divisible by $2$ in ${\rm{Pic}}\left(S^{[2]}\right)$. Assume that $\frac{d}{2}-n=m^2$. Then, as a special case of \cite{DM19}*{Proposition~7.1}, the rational map
\begin{equation}
	\label{eq:specialHK}
	\xymatrix@R=0pt{
		\cF_g\ar@{-->}[r] & \cM_{2n}^1
		: (S,H)\ar@{|->}[r] & \left(S^{[2]}, H_2-\frac{m}{2}\Delta\right),
	}
\end{equation}
after composing with the period map $\mathcal{M}^1_{2n} \longrightarrow \mathcal{P}_{\Lambda^1_{2n}}$, induces a birational isomorphism onto an irreducible component of $\cC_{2n,d}^1$. We use this fact to prove case~\ref{case:specialHK} of Theorem~\ref{thm:specialK3}:

\begin{cor}
	\label{cor:case_spHK}
	Fix a positive integer $n$ and let $d=2g-2$ be such that $\frac{d}{2}-n$ is a square. Then there exists a constant $C_n>0$, depending on $n$,  such that
	$
	\irr\left(\cF_g\right)
	\leq C_n\cdot g^{10}.
	$
\end{cor}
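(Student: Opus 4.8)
The strategy mirrors cases~\ref{case:cubicFourfold} and~\ref{case:Gushel-Mukai}: I would realize $\cF_g$ birationally as a component of a Heegner divisor in a period space of signature $(20,2)$ and then invoke the irrationality estimate of Theorem~\ref{cor:heegnerirrationality}. The two hyperkähler-specific ingredients are the birational model \eqref{eq:specialHK}, which uses the squareness of $\frac{d}{2}-n$, and Proposition~\ref{prop:HeegnerHK}, which identifies special divisors with Heegner divisors. Concretely, since $\frac{d}{2}-n$ is a square, \eqref{eq:specialHK} together with the open embedding $\cM^1_{2n}\hookrightarrow\mathcal{P}_{\Lambda^1_{2n}}$ realizes $\cF_g$ as birational to an irreducible component of the special divisor $\cC^1_{n,d}\subset\mathcal{P}_{\Lambda^1_{2n}}$. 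As $\irr$ is a birational invariant, it suffices to bound the degree of irrationality of this component.

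First I would apply Proposition~\ref{prop:HeegnerHK} in the case $\delta=1$, where $N=4n$, to place $\cC^1_{n,d}$ inside the finite union $\bigcup_{\gamma\in D(\Lambda^1_{2n})}Y_{\frac{d}{8n},\gamma}$. The component under consideration is an irreducible divisor contained in this union, so by irreducibility it lies in a single $Y_{\frac{d}{8n},\gamma}$, and since the two have the same dimension it is in fact an irreducible component $Y'$ of that Heegner divisor. Next I would apply Theorem~\ref{cor:heegnerirrationality}: the lattice $\Lambda^1_{2n}$ has signature $(20,2)$, so $m=20\ge 4$ and the theorem gives
\[
    \irr(\cF_g)=\irr(Y')=O\!\left(\left(\tfrac{d}{8n}\right)^{10}\right).
\]
For fixed $n$ this is $O_n(d^{10})$, and substituting $d=2g-2$ yields $\irr(\cF_g)\le C_n\cdot g^{10}$, as claimed.

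The point that demands attention is the dependence of $C_n$ on $n$, which is precisely why this case only gives a bound uniform in $g$ for each fixed $n$, in contrast to cases~\ref{case:cubicFourfold} and~\ref{case:Gushel-Mukai} where a single period lattice serves all genera. Here the lattice $\Lambda^1_{2n}$, and with it the level $N=4n$, the Weil representation $\rho_{\Lambda^1_{2n}}$, and the finite-dimensional space of vector-valued modular forms governing the coefficients $\deg(\overline{Y}_{n,\gamma})$ via Corollaries~\ref{cor:growthvector} and~\ref{cor.modular}, all vary with $n$; the implied constant therefore genuinely depends on $n$, and the factor $(8n)^{-10}$ is absorbed into $C_n$ as well. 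Beyond bookkeeping this dependence, I expect no real obstacle: once \eqref{eq:specialHK} and Proposition~\ref{prop:HeegnerHK} are available, the argument is formal.
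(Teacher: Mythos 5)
Your proposal is correct and follows essentially the same route as the paper, whose proof of this corollary is exactly the combination of the birational model \eqref{eq:specialHK} (valid since $\frac{d}{2}-n$ is a square) with Proposition~\ref{prop:HeegnerHK} and Theorem~\ref{cor:heegnerirrationality}. You merely make explicit the steps the paper leaves implicit --- that the irreducible component lands in a single Heegner divisor $Y_{\frac{d}{8n},\gamma}$ and is a component of it for dimension reasons, and that the constant depends on $n$ because the lattice $\Lambda^1_{2n}$, and hence the space of vector-valued modular forms controlling the degree estimates, varies with $n$.
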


\begin{proof}
	This follows immediately from the previous discussion, together with Proposition \ref{prop:HeegnerHK} and Theorem \ref{cor:heegnerirrationality}.
\end{proof}

%----------A uniform estimate on the degrees of irrationality
\section{A uniform estimate on the degrees of irrationality}
\label{sect:uniformEst}

In this section, we provide a uniform estimate of the degrees of irrationality for the moduli spaces of polarized K3 surfaces. The main ingredient is to realize these moduli spaces as Kudla's special cycles in a certain period space.

%----------Siegel modular forms
\subsection{Siegel modular forms}
\label{subsect:SiegelModForm}

Let $\mathcal{H}_r$ denote the Siegel upper-half space of $r\times r$ symmetric complex matrices with positive definite imaginary part. This has a natural action of the group $\Gamma_r = \operatorname{Sp}_{2r}(\mathbb{Z})$. Let now $k\in \mathbb{Z}$ be an integer and let $V$ be a complex vector space with a representation  $\rho\colon \Gamma_r \longrightarrow \GL(V)$  which is trivial on some congruence subgroup. Then a \emph{Siegel modular form of genus $r$, weight $k$ and type $\rho$} is a holomorphic function $F\colon \mathcal{H}_r \longrightarrow V$ such that
$$
    F(g\cdot \tau) = \det(c\tau+d)^k \rho(g)(F(\tau)), \qquad \text{ for all } g= \begin{pmatrix} a & b \\ c & d \end{pmatrix} \in \Gamma_r.
$$
Such a form has a Fourier series expansion indexed by positive-semidefinite symmetric matrices with rational coefficients:
$$
    F(\tau) = \sum_{T \in \operatorname{Sym}(\mathbb{Q})_{\geq 0}} c_{T}\cdot q^{T} ,\qquad q^T:= e^{2\pi i \operatorname{tr}(T\tau)}.
$$
The Fourier coefficients have at most polynomial growth: more precisely there exists a $C>0$ such that
\begin{equation}
\label{eqn:SiegelBound}
\left|c_T\right|\leq C\cdot {\rm{det}}(T)^{k}
\end{equation}
for all $T$ that are positive definite \cite{AZ95}*{Theorem~3.5}.

%-----Kudla's special cycles and their generating series
\subsection{Kudla's special cycles and their generating series}
\label{subsect:KudlaSpCycles}

We briefly review the generalization of Heegner divisors to higher codimensions due to Kudla \cite{Kud97}. Let $M$ be an even lattice of signature $(m,2)$, with $m$ even and retain the notations in Section~\ref{subsect:Heegner}. To each $r$-tuple $\mathbf{v} = (v_1,\dots,v_r)\in (M^\vee)^{\oplus r}$ we can associate the linear subspace
$
\left<\mathbf{v}\right>
= {\rm{span}}\{v_1,\dots,v_r\}
\subset (M\otimes\bQ)
$
and a \emph{moment matrix}, that is, the $r\times r$ symmetric matrix $
Q(\mathbf{v})\colonequals \frac{1}{2}\left(\langle v_i, v_j\rangle\right)_{ij}$.
At this point, let us fix a matrix $T\in\mathrm{Sym}_r(\bQ)_{\geq0}$ and an $r$-tuple $\boldsymbol{\gamma} \in D(M)^{\oplus r}$. Again $\Gamma_M$ acts on vectors in the equivalence class of ${\bf{\gamma}}$ with fixed moment matrix with finitely many orbits. 
%\begin{itemize}
	%$r\times r$ symmetric matrix $T = (T_{ij})$ which is positive semi-definite with entries in $\bQ$, or  in brief, and
	%\item an $r$-tuple $\underline{\gamma}\in D(M)^{\oplus r}$.
%\end{itemize}
Then the \emph{special cycle of moment $T$ and residue class $\boldsymbol{\gamma}$}, denoted as $Z_{T,\boldsymbol{\gamma}}$, is defined as the image in the period space $\mathcal{P}_M$ of the cycle
$$
\sum_{
	Q(\mathbf{v})= T,\;
	\mathbf{v}\equiv\boldsymbol{\gamma}
}
\langle
\mathbf{v}
\rangle^\perp
\subset\Omega(M).
$$
The $Z_{T,\boldsymbol{\gamma}}$ are cycles in $\mathcal{P}_M$ of codimension equal to the rank of $T$ and in general they are neither reduced nor irreducible. We can collect their classes in the Chow group in  a generating series: first we observe that the restriction of $\cO(1)$ on $\bP(M\otimes\bC)$ to the domain $\Omega(M)$ descends to a line bundle $\mathcal{L}$ on $\mathcal{P}_M$. Then we can form the generating series
$$
\Phi_{\boldsymbol{\gamma}}(\tau)
\colonequals\sum_{T\in\mathrm{Sym}_r(\bQ)_{\geq0}}\;
[Z_{T,\boldsymbol{\gamma}}]\cdot
c_1(\mathcal{L})^{r-\mathrm{rk}(T)}\;
q^T
$$
with coefficients in $\mathrm{CH}^r(\mathcal{P}_M)\otimes \bC$.  There is also a natural Weil representation
$$
    \rho_M\colon \Gamma_r \longrightarrow\GL(\bC[D(M)]^{\oplus r})
$$
which is trivial on a certain congruence subgroup, and  the generating series turns out to be a Siegel modular form with respect to this representation. This was proven for $r=1$ by Kudla \cite{Kud04}*{Theorem~3.2}, and then generalized by Zhang \cite{Zha09}, and also Bruinier and Westerholt-Raum \cite{BW15}*{Theorem~5.2}.

%For simplicity, we will assume that $m$ is even in order to avoid passing to the metaplectic double cover of the Siegel modular group $\mathrm{Sp}(2r,\bZ)$. In this setting, the statement of \cite{Zha09}*{Theorem~A} can be improved basing on the modularity result of Bruinier and Westerholt-Raum to be:

\begin{thm}\cites{Zha09,BW15}
\label{thm:Kudla-Siegel}
For any linear map 
$
	\ell\colon\mathrm{CH}^r(\mathcal{P}_M)\otimes\bC
	\longrightarrow\bC,
$
the series
$$
	\Theta_{\boldsymbol{\gamma}}(\tau)
	\colonequals\ell\left(\Phi_{\boldsymbol{\gamma}}(\tau)\right)
	= \sum_{T\in\mathrm{Sym}_r(\bQ)_{\geq0}}\;
	\ell\left([Z_{T,\boldsymbol{\gamma}}]\cdot
	c_1(\mathcal{L})^{r-\mathrm{rk}(T)}\right)\;
	q^T
$$
is a Siegel modular form of genus $r$, weight $1+\frac{m}{2}$, and type $\rho_M$.
\end{thm}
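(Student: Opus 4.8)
The plan is to deduce the statement formally from the modularity of the geometric generating series valued in the Chow group, which is the genuinely deep input supplied by \cites{Zha09,BW15} (generalizing \cite{Kud04} for $r=1$); the role of the linear functional $\ell$ is only to pass from this vector-space-valued statement to a classical, scalar-coefficient one, exactly as Theorem~\ref{Thm.Bor99} is contracted to Corollary~\ref{cor.modular} in the divisor case. Concretely, I would first assemble the cycle classes into the single series
$$
\Phi(\tau) = \sum_{\boldsymbol{\gamma}\in D(M)^{\oplus r}} \Phi_{\boldsymbol{\gamma}}(\tau)\, v_{\boldsymbol{\gamma}},
$$
regarded as a holomorphic function on $\mathcal{H}_r$ valued in $W\otimes\bC[D(M)^{\oplus r}]$ with $W=\mathrm{CH}^r(\mathcal{P}_M)\otimes\bC$. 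The content of \cites{Zha09,BW15} is precisely that $\Phi$ is a Siegel modular form of genus $r$, weight $1+\frac{m}{2}$, and type $\rho_M$ with coefficients in $W$, i.e.\ that
$$
\Phi(g\cdot\tau) = \det(c\tau+d)^{1+\frac{m}{2}}\,\rho_M(g)\bigl(\Phi(\tau)\bigr),
\qquad g = \begin{pmatrix} a & b \\ c & d \end{pmatrix}\in\Gamma_r,
$$
where $\rho_M(g)$ acts only on the group-algebra factor $\bC[D(M)^{\oplus r}]$ and fixes the $W$-factor.

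Granting this, the deduction is routine. I would apply the operator $\ell\otimes\mathrm{id}$ to the transformation law above: since the automorphy factor $\det(c\tau+d)^{1+\frac{m}{2}}$ is a scalar and $\rho_M(g)$ acts only on $\bC[D(M)^{\oplus r}]$, the map $\ell\otimes\mathrm{id}$ commutes with both. Hence $\Theta_{\boldsymbol{\gamma}}=\ell(\Phi_{\boldsymbol{\gamma}})$ assembles into $\Theta=(\ell\otimes\mathrm{id})(\Phi)$ satisfying the identical transformation law of weight $1+\frac{m}{2}$ and type $\rho_M$, now with scalar coefficients; holomorphicity on $\mathcal{H}_r$ is preserved because $\ell$ is a linear functional applied to a holomorphic vector-valued function.

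It remains to address holomorphicity at the cusps, and this is where the two ranges of $r$ are treated differently. For genus $r\geq 2$ there is nothing to check: the Koecher principle forces any function obeying the above transformation law to extend holomorphically to the boundary, so the Fourier expansion is automatically supported on positive-semidefinite $T$. For $r=1$ the statement collapses to the vector-valued modularity already recorded in Corollary~\ref{cor.modular} (equivalently Theorem~\ref{Thm.Bor99}), where holomorphicity at infinity is part of the input. The only genuine obstacle in the whole argument is therefore the modularity of the Chow-valued series $\Phi$ itself, which I do not reprove but import wholesale from \cites{Zha09,BW15}; everything downstream is the elementary observation that contracting a modular form with a linear functional preserves its weight and type.
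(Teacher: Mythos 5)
Your proposal is correct and matches the paper's treatment: the paper gives no proof of Theorem~\ref{thm:Kudla-Siegel}, importing the Chow-valued modularity wholesale from \cites{Zha09,BW15} (with \cite{Kud04} for $r=1$), and it formulates the scalar statement via contraction with $\ell$ in exactly the way Theorem~\ref{Thm.Bor99} is contracted to Corollary~\ref{cor.modular}. Your additional remarks --- that $\ell\otimes\mathrm{id}$ commutes with the scalar automorphy factor and with $\rho_M$, and that the Koecher principle handles the cusps for $r\geq 2$ --- are the routine gloss the paper leaves implicit, and introduce no gap.
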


In particular, we get a result analogous to Theorem \ref{cor:heegnerirrationality} for the irrationality of the special cycles:

\begin{thm}
\label{thm:boundKudlaCycles}
Assume $1\leq r\leq m-2$. For any $\boldsymbol{\gamma}\in D(M)^{\oplus r}$ and $T\in\mathrm{Sym}(\bQ)_{>0}$ of rank $r$, let $Z'_{T,\boldsymbol{\gamma}}$ be an irreducible component of the cycle $Z_{T,\boldsymbol{\gamma}}$. Then there exists a constant $C>0$ such that 
$$
	\irr(Z'_{T,\boldsymbol{\gamma}})\leq
	C\cdot {\rm{det}}\left(T\right)^{1+\frac{m}{2}}.
$$ 
\end{thm}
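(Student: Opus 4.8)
The plan is to mirror the proof of Theorem~\ref{cor:heegnerirrationality}, replacing the divisor embedding-and-projection argument with its higher-codimensional analogue and feeding in the Siegel modular form estimate \eqref{eqn:SiegelBound} instead of the scalar estimate of Corollary~\ref{cor:growthvector}. First I would fix the Baily--Borel embedding $\overline{\mathcal{P}}_M\hookrightarrow\bP^N$ exactly as in Section~\ref{sec:irrationalheegner}, so that a degree homomorphism is available; since $\overline{\mathcal{P}}_M$ is normal with boundary of codimension at least two, cycle classes on $\mathcal{P}_M$ extend to $\overline{\mathcal{P}}_M$ and carry a well-defined degree under this embedding. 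Because $T$ has rank exactly $r$, the cycle $Z_{T,\boldsymbol{\gamma}}$ has pure codimension $r$, and I want to bound the degree $\deg(\overline{Z}_{T,\boldsymbol{\gamma}})$ of its closure in $\bP^N$.

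The key linear functional to apply is $\ell(\cdot)\colonequals\deg(\cdot)$, the degree under the fixed embedding, composed appropriately so that it lands in $\bC$. Here I would use that for a rank-$r$ matrix $T$ the term $c_1(\mathcal{L})^{r-\mathrm{rk}(T)} = c_1(\mathcal{L})^0$ is trivial, so $\ell([Z_{T,\boldsymbol{\gamma}}]\cdot c_1(\mathcal{L})^{r-\mathrm{rk}(T)})$ is simply $\deg(\overline{Z}_{T,\boldsymbol{\gamma}})$. By Theorem~\ref{thm:Kudla-Siegel}, the resulting series $\Theta_{\boldsymbol{\gamma}}(\tau)$ is a Siegel modular form of genus $r$ and weight $k=1+\frac{m}{2}$. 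The hypothesis $m\geq 4$ (guaranteed by $r\leq m-2$ together with $r\geq 1$, giving $m\geq 3$, but one should confirm the weight is large enough for the estimate to apply cleanly) ensures we are in the convergent regime. Applying the bound \eqref{eqn:SiegelBound} for positive definite $T$ then yields
$$
    \deg(\overline{Z}_{T,\boldsymbol{\gamma}})
    = |c_T|
    \leq C\cdot\det(T)^{k}
    = C\cdot\det(T)^{1+\frac{m}{2}}.
$$

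Finally I would pass from the degree of the full cycle to the degree of irrationality of a single irreducible component, exactly as in the proof of Theorem~\ref{cor:heegnerirrationality}: a general linear projection $\overline{Z}_{T,\boldsymbol{\gamma}}\dashrightarrow\bP^{\dim Z_{T,\boldsymbol{\gamma}}}$ has degree $\deg(\overline{Z}_{T,\boldsymbol{\gamma}})$, and its restriction to the closure $\overline{Z'_{T,\boldsymbol{\gamma}}}$ of a chosen component remains dominant of degree at most $\deg(\overline{Z}_{T,\boldsymbol{\gamma}})$. Hence $\irr(Z'_{T,\boldsymbol{\gamma}})\leq\deg(\overline{Z}_{T,\boldsymbol{\gamma}})\leq C\cdot\det(T)^{1+\frac{m}{2}}$, which is the claim.

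I expect the main subtlety to be bookkeeping around the generating series rather than a deep obstacle: one must check that the functional $\deg$ is genuinely $\bC$-linear on $\mathrm{CH}^r(\mathcal{P}_M)\otimes\bC$ and that the degree of the closure agrees with the top self-intersection computed by the Siegel coefficient, i.e.\ that no correction terms from $c_1(\mathcal{L})$ intervene when $\mathrm{rk}(T)=r$. The condition $r\leq m-2$ is exactly what keeps the codimension-$r$ cycles from meeting the boundary in a problematic way and guarantees $\dim Z_{T,\boldsymbol{\gamma}} = m-r\geq 2$, so the linear-projection argument produces a dominant rational map to the correct-dimensional projective space. Assuming these compatibilities, the estimate \eqref{eqn:SiegelBound} does all the analytic work.
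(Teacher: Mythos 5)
Your overall architecture matches the paper's proof: apply Theorem~\ref{thm:Kudla-Siegel} to a degree-type functional $\ell$, extract the coefficient bound from \eqref{eqn:SiegelBound}, and finish with the same linear-projection argument as in Theorem~\ref{cor:heegnerirrationality}. But there is a genuine gap exactly at the point you flag and then wave through with ``assuming these compatibilities'': the well-definedness and linearity of $\ell$ on $\mathrm{CH}^r(\mathcal{P}_M)\otimes\bC$. Your justification --- that the boundary of $\overline{\mathcal{P}}_M$ has codimension at least two, so ``cycle classes on $\mathcal{P}_M$ extend'' --- is the divisor-level statement and suffices only for $r=1$. For general $r$ the localization sequence reads
$$
\mathrm{CH}_{m-r}\bigl(\overline{\mathcal{P}}_M\setminus\mathcal{P}_M\bigr)
\longrightarrow
\mathrm{CH}_{m-r}\bigl(\overline{\mathcal{P}}_M\bigr)
\longrightarrow
\mathrm{CH}_{m-r}\bigl(\mathcal{P}_M\bigr)
\longrightarrow 0,
$$
and a codimension-two boundary kills the left-hand group only when $m-r>m-2$, i.e.\ $r=1$; the theorem is applied in the paper with $r=7$, so this is not a corner case. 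The paper uses the stronger geometric input: boundary components of the Baily--Borel compactification correspond to isotropic subspaces $E\subset M\otimes\bQ$, which have dimension at most $2$ since the form has signature $(m,2)$; hence the boundary is at most one-dimensional, and the localization sequence gives $\mathrm{CH}^r(\mathcal{P}_M)\cong\mathrm{CH}^r(\overline{\mathcal{P}}_M)$ precisely in the range $1\leq r\leq m-2$. Note this is the true role of the hypothesis $r\leq m-2$, which you instead attribute to the linear-projection step (that step works in any dimension).

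Once the isomorphism is in place, the linearity worry dissolves in the way the paper makes explicit: rather than defining $\ell$ directly as ``degree of the closure,'' define it as the composition of $\mathrm{CH}^r(\mathcal{P}_M)\cong\mathrm{CH}^r(\overline{\mathcal{P}}_M)$ with intersection against $c_1(\mathcal{L})^{m-r}$, which is manifestly a homomorphism, and then check that on the classes of interest it computes $k^{-(m-\mathrm{rk}(T))}\deg(\overline{Z}_{T,\boldsymbol{\gamma}})$ when the embedding is given by $\mathcal{L}^{\otimes k}$ (the constant $k^{m-r}$ is absorbed into $C$). This also explains why one must embed by a power of $\mathcal{L}$ rather than an arbitrary projective embedding: the functional has to interact correctly with the $c_1(\mathcal{L})^{r-\mathrm{rk}(T)}$ corrections at the rank-deficient terms of the generating series, not only at your rank-$r$ term. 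Finally, a small arithmetic slip: $r\geq1$ and $r\leq m-2$ give $m\geq3$, not $m\geq4$; no such weight hypothesis is needed, as \eqref{eqn:SiegelBound} is invoked as stated. With the localization argument inserted, your proof coincides with the paper's.
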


\begin{proof}
Let $\overline{\mathcal{P}}_{M}$ be the Baily--Borel compactification of the period space $\mathcal{P}_{M}$. Boundary components of $\overline{\mathcal{P}}_M$ correspond to orbits of isotropic subspaces $E\subset M\otimes \bQ$,  see \cite{BB66} and also \cite{GHS07}*{Section 2.2}. Since $M$ has signature $(m,2)$, such a subspace $E$ is at most two dimensional, which implies that the boundary $\overline{\mathcal{P}}_M \setminus \mathcal{P}_M$ is at most one dimensional.
In particular, since $1\leq r\leq m-2$, the localization sequence for Chow groups gives an isomorphism between $\mathrm{CH}^r(\mathcal{P}_M)$ and $\mathrm{CH}^r(\overline{\mathcal{P}}_M)$. This is obtained by associating to each cycle $Z$ in $\mathcal{P}_M$ the class of its closure $\overline{Z}$ in $\overline{\mathcal{P}}_M$.
		
The line bundle $\mathcal{L}$ is ample, so we can fix a multiple $\mathcal{L}^{\otimes k}$ which induces an embedding $\overline{\mathcal{P}}_M \subset  \bP^N$. Moreover, the intersection with $c_1(\mathcal{L})^{m-r}$ induces a homomorphism $\mathrm{CH}^r(\overline{\mathcal{P}}_M) \to \mathbb{Z}$ that we can compose  with the isomorphism $\mathrm{CH}^r(\mathcal{P}_M) \cong \mathrm{CH}^r(\overline{\mathcal{P}}_M)$ to obtain a linear map
	$$
	\ell\colon
	\mathrm{CH}^{r}\left(\mathcal{P}_M\right)\otimes\bC
	\longrightarrow\bC.
	$$
	In particular, if $[Z_{T,\boldsymbol{\gamma}}]$ is a special cycle of codimension $\operatorname{rk}(T)$, we see that 
	\[ \ell([Z_{T,\boldsymbol{\gamma}}]\cdot c_1(\mathcal{L})^{r-\operatorname{rk}(T)}) =  [\overline{Z}_{T,\boldsymbol{\gamma}}]\cdot c_1(\mathcal{L})^{m-\operatorname{rk}(T)} = \frac{1}{k^{m-\operatorname{rk}(T)}} \deg(\overline{Z}_{T,\boldsymbol{\gamma}}) \]
	where $\operatorname{deg}(\overline{Z}_{T,\boldsymbol{\gamma}})$ is the degree of $\overline{Z}_{T,\boldsymbol{\gamma}}$ as a subvariety of $\mathbb{P}^N$. At this point, Theorem~\ref{thm:Kudla-Siegel} implies that the series
	$$
	\sum_{T\in\mathrm{Sym}_r(\bQ)_{\geq0}}\;
	\ell\left([Z_{T,\boldsymbol{\gamma}}]\cdot
	c_1(\mathcal{L})^{r-\mathrm{rk}(T)}\right)\;
	q^T
	$$
	is a Siegel modular form of weight $1+\frac{m}{2}$ for the Weil representation $\rho_M$. Thus, the estimate~\eqref{eqn:SiegelBound} shows that there exists a $C>0$ such that
	\[
	\deg(\overline{Z}_{T,\boldsymbol{\gamma}})
	=k^{m-r}\ell\left([Z_{T,\boldsymbol{\gamma}}]\right)
	\leq C\cdot\det(T)^{1+\frac{m}{2}},
	\]
	for any positive definite $T$. We conclude as in the proof of Theorem~\ref{cor:heegnerirrationality}.
\end{proof}

%-----Period spaces of K3 as Kudla's special cycles
\subsection{Period spaces of K3 surfaces as special cycles}
\label{subsect:specialK3cycles}

Let $(S,H)$ be a primitively polarized K3 surface of Picard number one and degree $d$, and let $H^2(S,\mathbb{Z})_{\rm prim}$ be its  primitive cohomology, i.e. the orthogonal to the class of $H$. This lattice, if twisted by $(-1)$, is even of signature $(19,2)$, discriminant $d$, and isometric to
$$
H^2(S,\mathbb{Z})_{\rm prim}(-1) \cong \Lambda_{d}\colonequals
E_8^{\oplus 2}\oplus U^{\oplus 2}\oplus \langle d\rangle.
$$
By \cite{Nik80}*{Corollary~1.12.3}, this lattice admits a primitive embedding into an even unimodular lattice of signature $(26,2)$:
\begin{equation}
\label{eqn:lambdaInLambda}
\xymatrix{
	\Lambda_{d}\ar@{^(->}[r] & 
	\Lambda_{\#}\colonequals
	E_8^{\oplus 3}\oplus U^{\oplus 2}.
}
\end{equation}
Let us retain the notations in Section~\ref{subsect:Heegner}. Then the map \eqref{eqn:lambdaInLambda} induces an injection $\Omega(\Lambda_{d})\hookrightarrow\Omega(\Lambda_{\#})$. Since $\Lambda_{\#}$ is unimodular, it also induces an injection
$$\xymatrix{
	\Gamma_{\Lambda_{d}}\ar@{^(->}[r] & 
	\Gamma_{\Lambda_{\#}} : g\ar@{|->}[r] &
	g_{\#}
}$$
where $g_{\#}$ is the unique extension of $g$ that acts trivially on the orthogonal complement $\Lambda_{d}^{\perp\Lambda_{\#}}$ \cite{Huy16}*{Proposition~14.2.6}. Let us denote
$$
\mathcal{P}_{d}\colonequals
\mathcal{P}_{\Lambda_{d}}
= \Omega(\Lambda_{d})/\Gamma_{\Lambda_{d}}
\quad
\text{and}
\quad
\mathcal{P}_{\#}\colonequals
\mathcal{P}_{\Lambda_{\#}}
= \Omega(\Lambda_{\#})/\Gamma_{\Lambda_{\#}}.
$$
The period space $\mathcal{P}_d$ is irreducible and birational to the moduli space $\mathcal{F}_g$.
Then the above injections define a morphism of period spaces
$$\xymatrix{
	f_{d}\colon\mathcal{P}_{d}
	\ar[r] & \mathcal{P}_{\#}.
}$$
By \cite{OS18}*{Lemma~4.3}, if two K3 surfaces (possibly with polarizations of different degrees) have the same period point in $\mathcal{P}_{\#}$, then their transcendental lattices are Hodge isometric, which is equivalent to being Fourier--Mukai partners \cite{Orl97}*{Theorem~3.3}. In general, a given point in $\mathcal{P}_{\#}$ may have infinitely many preimages in $\bigcup_{d\geq 2}\mathcal{P}_d$. But if a K3 surface has Picard number one, then all of its Fourier--Mukai partners are of Picard number one with the same polarization degree. This allows us to use the counting formula in \cite{HLOY03} for such partners of a fixed degree and conclude that
\begin{equation}
\label{eqn:FM-number}
	\deg(f_d) \leq 2^{\omega(g-1)-1}
\end{equation}
where $d=2g-2$ and $\omega(g-1)$ is the number of distinct prime factors of $\frac{d}{2}=g-1$.

\begin{rmk}
The paper \cite{OS18} is written in the context of Shimura varieties, where the period spaces $\mathcal{P}_d$ and $\mathcal{P}_{\#}$ are defined instead as quotients of $\Omega(\Lambda_d)$ and $\Omega(\Lambda_{\#})$ by the arithmetic groups 
$$
	{\rm{SO}}(\Lambda_d)\cap \Gamma_{\Lambda_d}
	\quad\text{and}\quad
	{\rm{SO}}(\Lambda_{\#})\cap \Gamma_{\Lambda_{\#}}.
$$
These are subgroups of the monodromy groups $\Gamma_{\Lambda_{d}}$ and $\Gamma_{\Lambda_{\#}}$ of index $2$, so the period spaces of \cite{OS18} are double covers of the period spaces considered here.
\end{rmk}

Notice that the discriminant group $D(\Lambda_{\#})$ is trivial as $\Lambda_{\#}$ is unimodular. In particular, the special cycles $Z_{T,\boldsymbol{\gamma}} = Z_{T,\mathbf{0}}$ are indexed only by $T\in\mathrm{Sym}_r(\bQ)_{\geq0}$.

\begin{prop}
	\label{prop:ImageKudla}
	For each $d$, the image $f_d(\mathcal{P}_{d})\subset\mathcal{P}_{\#}$ is an irreducible component of the special cycle $Z_{T,\mathbf{0}}$ for some $T\in\mathrm{Sym}_7(\bQ)_{>0}$ with
	$$
	\det(T) = \frac{1}{2^7}\cdot d.
	$$
\end{prop}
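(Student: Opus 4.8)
The plan is to realize $f_d(\mathcal{P}_d)$ directly as the piece of a special cycle cut out by a $\bZ$-basis of the orthogonal complement $L \colonequals \Lambda_d^{\perp\Lambda_{\#}}$, and then to read off $\det(T)$ from the discriminant of $L$. First I would record the invariants of $L$. Since $\Lambda_{\#}$ has signature $(26,2)$ and $\Lambda_d$ has signature $(19,2)$, the complement $L$ has rank $28-21 = 7$ and signature $(7,0)$; in particular $L$ is positive definite of rank $7$, matching $r=7$ in the statement. I would then fix a $\bZ$-basis $v_1,\dots,v_7$ of $L$, set $\mathbf{v} = (v_1,\dots,v_7)$, and define $T \colonequals Q(\mathbf{v}) = \tfrac{1}{2}(\langle v_i,v_j\rangle)_{ij}$. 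Because $\Lambda_{\#}$ is unimodular we have $\Lambda_{\#}^\vee = \Lambda_{\#}$, so each $v_i$ is integral and $\mathbf{v}\equiv\mathbf{0}$ in $D(\Lambda_{\#})^{\oplus 7}$; thus $\mathbf{v}$ is a legitimate tuple appearing in the union defining $Z_{T,\mathbf{0}}$, and positive-definiteness of $L$ forces $T$ to be positive definite of rank $7$.

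Next I would identify the corresponding summand of the cycle with $f_d(\mathcal{P}_d)$. Since $\mathbf{v}$ spans $\langle\mathbf{v}\rangle = L\otimes\bQ$ and the embedding $\Lambda_d\hookrightarrow\Lambda_{\#}$ is primitive, the orthogonal complement $\langle\mathbf{v}\rangle^\perp$ inside $\Lambda_{\#}\otimes\bC$ is exactly $\Lambda_d\otimes\bC$. Intersecting with the period domain gives $\langle\mathbf{v}\rangle^\perp\cap\Omega(\Lambda_{\#}) = \Omega(\Lambda_d)$ under the injection $\Omega(\Lambda_d)\hookrightarrow\Omega(\Lambda_{\#})$, and passing to the $\Gamma_{\Lambda_{\#}}$-quotient identifies its image with $f_d(\mathcal{P}_d)$ by the very definition of $f_d$. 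Hence $f_d(\mathcal{P}_d)$ is the image of one of the summands in the union defining $Z_{T,\mathbf{0}}$, so it is contained in $Z_{T,\mathbf{0}}$, and it is irreducible because $\mathcal{P}_d$ is. As $f_d$ is finite onto its image, $\dim f_d(\mathcal{P}_d) = \dim\mathcal{P}_d = 19 = 26-7 = \dim Z_{T,\mathbf{0}}$, which upgrades the containment to the assertion that $f_d(\mathcal{P}_d)$ is a full irreducible component.

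It remains to compute $\det(T)$. By construction $T = \tfrac{1}{2}G$, where $G$ is the Gram matrix of the chosen basis of $L$, so $\det(T) = 2^{-7}\det(G) = 2^{-7}\lvert\operatorname{disc}(L)\rvert$, the last equality holding because $L$ is positive definite. The decisive input is the standard fact (Nikulin) that for a primitive embedding into a unimodular lattice the discriminant form of the orthogonal complement is the negative of that of the sublattice; in particular $\lvert D(L)\rvert = \lvert D(\Lambda_d)\rvert$. Since $\Lambda_d = E_8^{\oplus 2}\oplus U^{\oplus 2}\oplus\langle d\rangle$ and only the summand $\langle d\rangle$ contributes to the discriminant group, we obtain $D(\Lambda_d)\cong\bZ/d\bZ$ and therefore $\lvert\operatorname{disc}(L)\rvert = \lvert D(L)\rvert = d$, giving $\det(T) = 2^{-7}d$ as claimed.

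The main obstacle is the lattice-theoretic bookkeeping rather than any deep new idea. Verifying $\langle\mathbf{v}\rangle^\perp = \Lambda_d\otimes\bC$ genuinely uses primitivity of the embedding, and the clean value $\det(G) = \lvert D(\Lambda_d)\rvert$ relies jointly on the unimodularity of $\Lambda_{\#}$ (so that $D(L)\cong D(\Lambda_d)$) and on positive-definiteness of $L$ (so that the sign of the determinant is correct). The remaining care is in the dimension count, which is exactly what promotes the inclusion $f_d(\mathcal{P}_d)\subset Z_{T,\mathbf{0}}$ to the statement that $f_d(\mathcal{P}_d)$ is an irreducible \emph{component}.
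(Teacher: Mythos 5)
Your proposal is correct and follows essentially the same route as the paper's proof: fix a basis of $\Lambda_d^{\perp\Lambda_{\#}}$, take $T$ to be its moment matrix, identify $\Omega(\Lambda_d)$ with the corresponding summand $\langle\mathbf{v}\rangle^{\perp}\cap\Omega(\Lambda_{\#})$ to get $f_d(\mathcal{P}_d)\subset Z_{T,\mathbf{0}}$, upgrade to a component by the dimension count, and compute $\det(T)=\frac{1}{2^7}\,\mathrm{disc}(\Lambda_d^{\perp\Lambda_{\#}})=\frac{d}{2^7}$. Your only deviation is cosmetic: you justify $\lvert\operatorname{disc}(L)\rvert=\lvert D(\Lambda_d)\rvert=d$ via Nikulin's discriminant-form identity for primitive sublattices of unimodular lattices, which is exactly the standard fact the paper invokes more tersely, and you are slightly more explicit about the residue class $\mathbf{v}\equiv\mathbf{0}$ and the positive-definiteness of $T$.
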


\begin{proof}
	Under the embedding defined in \eqref{eqn:lambdaInLambda}, let us fix a basis $ \mathbf{w} = (w_1,\dots,w_7) \in\Lambda_{\#}^{\oplus7}$
	for the orthogonal complement $\Lambda_d^{\perp \Lambda_{\#}}$ and let $T\colonequals\frac{1}{2}(\langle w_i,w_j\rangle)$
	be the corresponding moment matrix. By construction,
	$$
	\Omega\left(\Lambda_d\right)
	= \langle\mathbf{w}\rangle^{\perp}
	\subset\Omega\left(\Lambda_{\#}\right).
	$$
	Therefore,
	$$
	\Omega\left(\Lambda_d\right)
	\subset
	\sum_{Q(\mathbf{v})=T}\langle\mathbf{v}\rangle^\perp.
	$$
	This shows that $f_d(\mathcal{P}_{d})\subset Z_{T,\mathbf{0}}$ and since they have the same dimension, it must be an irreducible component. Finally, using the fact that $\Lambda_{\#}$ is unimodular, we conclude that
	$$
	\det(T)
	=\frac{1}{2^7}\cdot{\rm{disc}}(\Lambda_d^{\perp \Lambda_{\#}})
	=\frac{1}{2^7}\cdot{\rm{disc}}(\Lambda_d)
	=\frac{1}{2^7}\cdot d,
	$$
\end{proof}

%-----The uniform estimate
\subsection{The uniform estimate}
\label{subsect:uniformEst}

Before proving Theorem~\ref{thm:allK3}, let us recall that $\omega(n)$ counts the number of distinct prime factors of a positive integer $n$. Then $2^{\omega(n)} \leq d(n)$ where $d(n)$ is the divisor function that counts the number of divisors of $n$. It is well known, that
\begin{equation}
	\label{eqn:est_on_FM}
	2^{\omega(n)} \leq d(n)
	= O(n^\varepsilon)
	\quad\text{for every}\quad
	\varepsilon>0
\end{equation}
see, for example, \cite{HW08}*{Sections~18.1, 22.11, and 22.13}. Now we are ready to give the uniform estimate:

\begin{thm}[= Theorem~\ref{thm:allK3}]
	\label{thm:allK3_2}
	For each $\varepsilon>0$, there exists a constant $C_\varepsilon>0$ such that
	$
	\irr(\mathcal{F}_g)
	\leq C_\varepsilon\cdot g^{14+\varepsilon}
	$
	for every $g$.
\end{thm}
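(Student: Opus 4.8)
The plan is to combine the geometric input of Proposition~\ref{prop:ImageKudla}, which realizes $\mathcal{P}_d$ as an irreducible component of a Kudla special cycle inside the \emph{single} fixed period space $\mathcal{P}_{\#}$, with the uniform polynomial bound of Theorem~\ref{thm:boundKudlaCycles}. Since $\mathcal{P}_d$ is birational to $\mathcal{F}_g$ and $\irr$ is a birational invariant, we have $\irr(\mathcal{F}_g)=\irr(\mathcal{P}_d)$, so it suffices to bound $\irr(\mathcal{P}_d)$.

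First I would record the behaviour of $\irr$ under finite maps: if $f\colon X\to Y$ is a dominant, generically finite map of degree $e$ between irreducible varieties of the same dimension, then $\irr(X)\leq e\cdot\irr(Y)$, because post-composing a degree-$\irr(Y)$ map $Y\dashrightarrow\bP^{\dim Y}$ with $f$ produces a dominant map $X\dashrightarrow\bP^{\dim X}$ of degree at most $e\cdot\irr(Y)$. By \eqref{eqn:FM-number} the morphism $f_d\colon\mathcal{P}_d\to\mathcal{P}_{\#}$ is finite of degree $2^{\omega(g-1)-1}$ onto its image, and by Proposition~\ref{prop:ImageKudla} this image is an irreducible component $Z'_{T,\mathbf{0}}$ of a special cycle $Z_{T,\mathbf{0}}$ with $T\in\mathrm{Sym}_7(\bQ)_{>0}$ and $\det(T)=d/2^7$. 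Hence $\irr(\mathcal{F}_g)\leq 2^{\omega(g-1)-1}\cdot\irr(Z'_{T,\mathbf{0}})$.

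Next I would apply Theorem~\ref{thm:boundKudlaCycles} to the lattice $\Lambda_{\#}$, which has signature $(26,2)$, so $m=26$, and to the codimension $r=\operatorname{rk}\bigl(\Lambda_d^{\perp\Lambda_{\#}}\bigr)=28-21=7$. The hypothesis $1\leq r\leq m-2$ holds since $1\leq 7\leq 24$, and the theorem furnishes a constant $C>0$, independent of $T$, with $\irr(Z'_{T,\mathbf{0}})\leq C\cdot\det(T)^{1+m/2}=C\cdot\det(T)^{14}$. Using $\det(T)=d/2^7=(g-1)/2^6$ gives $\irr(Z'_{T,\mathbf{0}})=O(g^{14})$, and combining with the previous step yields
$$
    \irr(\mathcal{F}_g)\leq 2^{\omega(g-1)-1}\cdot C\cdot\det(T)^{14}=O\bigl(2^{\omega(g-1)}\cdot g^{14}\bigr).
$$
Finally, the estimate \eqref{eqn:est_on_FM} gives $2^{\omega(g-1)}=O(g^{\varepsilon})$ for every $\varepsilon>0$, whence $\irr(\mathcal{F}_g)=O(g^{14+\varepsilon})$, as claimed.

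The key point—and the reason the argument produces a \emph{uniform} bound rather than a separate constant for each genus—is that the ambient lattice $\Lambda_{\#}$, the codimension $r=7$, and hence the Siegel modular form and the linear functional $\ell$ underlying Theorem~\ref{thm:boundKudlaCycles} are all fixed \emph{independently of $g$}; only the moment matrix $T$ varies. This is exactly what allows the single constant $C$ from the Fourier-coefficient bound \eqref{eqn:SiegelBound} to serve for all $g$ at once. I expect the main obstacle to lie not in the estimate itself but in this uniformization: one must embed every $\Lambda_d$ into one unimodular $\Lambda_{\#}$, at the unavoidable cost of the factor $\deg(f_d)=2^{\omega(g-1)-1}$ counting Fourier--Mukai partners, which is harmless once absorbed into the $g^{\varepsilon}$ term.
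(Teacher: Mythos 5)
Your proposal is correct and follows essentially the same route as the paper's own proof: it combines Proposition~\ref{prop:ImageKudla} and Theorem~\ref{thm:boundKudlaCycles} applied to the fixed lattice $\Lambda_{\#}$ (so $m=26$, $r=7$, $\det(T)=d/2^7$), accounts for the Fourier--Mukai factor $\deg(f_d)=2^{\omega(g-1)-1}$ from \eqref{eqn:FM-number}, and absorbs it via \eqref{eqn:est_on_FM} into the $g^{\varepsilon}$ term. The only difference is cosmetic --- you spell out the finite-map inequality $\irr(X)\leq e\cdot\irr(Y)$ and keep the exact factor $2^{\omega(g-1)-1}$ where the paper uses $2^{\omega(g-1)}$ --- and your closing remark about the uniformity of the constant $C$ in $T$ correctly identifies the crux of the argument.
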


\begin{proof}
	Recall that $d=2g-2$ and that $\Lambda_{\#}$ has signature $(26,2)$. These facts, together with Theorem~\ref{thm:boundKudlaCycles} and Proposition~\ref{prop:ImageKudla}, imply that there exists a constant $C>0$ independent of $g$ such that
	$\irr(f_d(\mathcal{P}_d))
	\leq C\cdot g^{14}$.
	On the other hand, since $\mathcal{F}_g$ and $\mathcal{P}_d$ are birational, it follows from \eqref{eqn:FM-number} that
	$
	\irr(\mathcal{F}_g)
	\leq 2^{\omega(g-1)}\cdot
	\irr(f_d(\mathcal{P}_d)).
	$
	Combining the above two inequalities, we obtain
	$
	\irr(\mathcal{F}_g)
	\leq 2^{\omega(g-1)}\cdot C\cdot g^{14}.
	$
	Applying \eqref{eqn:est_on_FM} to the factor $2^{\omega(g-1)}$, we conclude that, for every $\varepsilon>0$, there exists a constant $C_\varepsilon>0$ such that
	$
	\irr(\mathcal{F}_g)
	\leq C_\varepsilon\cdot g^{14+\varepsilon}.
	$
\end{proof}

\begin{rmk}
Our motivation comes from understanding the birational complexity of the family of moduli spaces $\mathcal{F}_g$, but one observes that this strategy can be used to bound the degree of irrationality of a much larger class of Shimura varieties of orthogonal type. Whenever one has an infinite family of even lattices $\Lambda_n$ of signature $(m,2)$, by taking an appropriate number of copies of $E_8$, the lattices $\Lambda_n$ can all be primitively embedded in $\Lambda_\#=U^{\oplus 2}\oplus E_8^{\oplus \ell}$ where the action of the arithmetic group on $\Omega(\Lambda_n)$ extends to a suitable action on $\Omega(\Lambda_\#)$. This is indeed the case for families of moduli spaces of abelian surfaces, hyperk\"{a}hler varieties of Kummer type and K3-type. It is a subtle problem to understand or bound the degree of the induced map on period spaces
$
	f_n:\mathcal{P}_n\longrightarrow\mathcal{P}_\#.
$
\end{rmk}

%-----A concrete construction
\subsection{A concrete construction}
\label{sec:ogradyexamples}

In our method, the maps $\mathcal{F}_g\dashrightarrow \mathbb{P}^{19}$ giving the bound on the degree of irrationality are constructed indirectly. However, a construction of O'Grady  provides concrete examples of maps satisfying the bounds in Theorem~\ref{thm:specialK3}. More precisely, O'Grady constructs finite surjective maps
\begin{equation}
\label{eq:ogradymaps}
    f_{n,k}\colon \mathcal{F}_{n^2k+1} \longrightarrow \mathcal{F}_{k+1}.
\end{equation}
These maps are defined in \cite{OGr89}*{page~163} using lattices, which can also be interpreted via moduli spaces of twisted K3 surfaces \cite{DM21}*{Example~4.15}: the space $\mathcal{F}_{n^2k+1}$ is birational to the moduli space of K3 surfaces of genus $k+1$, together with a Brauer class of order $n$, and then the map \eqref{eq:ogradymaps} simply forgets the Brauer class.

The maps \eqref{eq:ogradymaps} imply that $\irr(\mathcal{F}_{n^2k+1}) \leq \irr(\mathcal{F}_{k+1})\cdot\deg(f_{n,k})$. Hence, if we can bound $\deg(f_{n,k})$ in terms of $n$, we obtain an analogous bound for the irrationality of $\mathcal{F}_{n^2k+1}$. This was done by Kond\={o}, in the case that $k=1$ and $n=p$ is an odd prime number. He proved in \cite{Kon93}*{Lemma~3.2}  that $\deg(f_{p,1}) = p^{20}+p^{10}$, so that
$$
    \irr(\mathcal{F}_{p^2+1})
    \leq \irr(\mathcal{F}_2)\cdot (p^{20}+p^{10}).
$$
Observe that this is exactly the same kind of bound appearing in Theorem~\ref{thm:specialK3}.

\bibliography{IrrationalK3_bib}
\bibliographystyle{alpha}

\ContactInfo
\end{document}